\documentclass[12pt]{amsart}
\usepackage{verbatim}
\usepackage{amssymb}



\newtheorem{theorem}{Theorem}[section]
\newtheorem{proposition}[theorem]{Proposition}

\newtheorem{lemma}[theorem]{Lemma}
\newtheorem{claim}[theorem]{Claim}

\newtheorem{definition}[theorem]{Definition}

\newtheorem{conjecture}[theorem]{Conjecture}

\theoremstyle{plain}
\numberwithin{equation}{section}

\theoremstyle{remark}
\newtheorem{remark}[theorem]{Remark}

\newcommand{\C}{{\mathbb C}}

\newcommand{\Q}{{\mathbb Q}}
\newcommand{\R}{{\mathbb R}}
\newcommand{\Z}{{\mathbb Z}}
\newcommand{\N}{{\mathbb N}}

\newcommand{\cV}{{\mathcal V}}

\newcommand{\Pl}{{\mathbb P}}

\DeclareMathOperator{\bN}{\mathbb{N}}
\DeclareMathOperator{\Res}{Res}

\DeclareMathOperator{\lcm}{lcm}

\newcommand{\bP}{{\mathbb P}}
\newcommand{\bZ}{{\mathbb Z}}

\newcommand{\bC}{{\mathbb C}}

\newcommand{\bQ}{{\mathbb Q}}
\newcommand{\bF}{{\mathbb F}}

\newcommand{\lra}{\longrightarrow}

\newcommand{\cO}{\mathcal{O}}

\newcommand{\cC}{\mathcal{C}}

\newcommand{\cS}{\mathcal{S}}

\newcommand{\Zp}{\bZ_p}
\newcommand{\Qp}{\bQ_p}

\newcommand{\Cp}{\bC_p}
\newcommand{\PCp}{\bP^1(\Cp)}

\newcommand{\Dbar}{\overline{D}}

\newcommand{\rmkd}[1]{}

\newcommand{\dsps}{\displaystyle}

\title{A gap principle for dynamics}

\author[Benedetto, Ghioca, Kurlberg, Tucker]
{Robert L.  Benedetto, Dragos Ghioca, P\"{a}r Kurlberg,
  and Thomas J. Tucker}

\date{October 6, 2008}

\subjclass[2000]{Primary: 14G25.  Secondary: 37F10}
\keywords{$p$-adic dynamics, Mordell-Lang conjecture}

\address{Department of Mathematics and Computer Science \\
        Amherst College \\
        Amherst, MA 01002 \\
        USA}
\email{rlb@cs.amherst.edu}

\address{Department of Mathematics and Computer Science\\
University of Lethbridge\\
Lethbridge, AB T1K 3M4\\
Canada}
\email{dragos.ghioca@uleth.ca}

\address{
Department of Mathematics\\
KTH\\
SE-100 44 Stockholm\\
Sweden}
\email{kurlberg@math.kth.se}

\address{Department of Mathematics\\
University of Rochester\\
Rochester, NY 14627\\ 
USA}
\email{ttucker@math.rochester.edu}

\begin{document}


\begin{abstract}
  Let $f_1,\ldots,f_g\in {\mathbb C}(z)$ be rational functions, let
  $\Phi=(f_1,\ldots,f_g)$ denote their coordinatewise action on
  $({\mathbb P}^1)^g$, let $V\subset ({\mathbb P}^1)^g$ be a proper subvariety, and
  let $P=(x_1,\ldots,x_g)\in ({\mathbb P}^1)^g({\mathbb C})$ be a nonpreperiodic point
  for $\Phi$.  We show that if $V$ does not contain any periodic
  subvarieties of positive dimension, then the set of $n$ such that
  $\Phi^n(P) \in V({\mathbb C})$ must be very sparse.  In particular, for any
  $k$ and any sufficiently large $N$, the number of $n \leq N$ such that
  $\Phi^n(P) \in V({\mathbb C})$ is less than $\log^k N$, where $\log^k$
  denotes the $k$-th iterate of the $\log$ function.  This can be
  interpreted as an analog of the gap principle of Davenport-Roth and
  Mumford.
\end{abstract}

\maketitle

\section{Introduction}
\label{sec:introduction}

The Mordell-Lang conjecture proved by Faltings \cite{Faltings} and
Vojta \cite{V1} implies that if $V$ is a subvariety of a semiabelian
variety $G$ defined over $\C$ such that $V$ contains no translate of a
positive-dimensional algebraic subgroup of $G$, then $V(\C)$ contains
at most finitely many points of any given finitely generated subgroup
$\Gamma$ of $G(\C)$. A reformulation of this result says that if no
translate of $V$ contains a positive-dimensional subvariety $W$ which
is fixed by the multiplication-by-$n$-map (for any positive integer
$n\ge 2$), then $V(\C)\cap\Gamma$ is finite (see \cite[Lemma
3]{Abramovich}).

In \cite{new-log}, Ghioca and Tucker proposed a dynamical analogue of
the Mordell-Lang conjecture (see also \cite{Denis-dynamical} and
\cite{Bell}).

\begin{conjecture}
\label{dynamical Mordell-Lang}
Let $X$ be a quasiprojective variety defined over $\C$, let $V\subset
X$ be any subvariety, let $\Phi:X\lra X$ be any endomorphism, and let
$P\in X(\C)$. For any integer $m\geq 0$, denote by
  $\Phi^m$ the $m^{\text{th}}$ iterate $\Phi\circ\cdots\circ\Phi$.  Then
$\{n\geq 0 : \Phi^n(P)\in V(\C)\}$ is a union of at most finitely many
arithmetic progressions and at most finitely many other integers.
\end{conjecture}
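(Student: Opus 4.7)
My plan is to follow the $p$-adic analytic strategy that descends from Skolem--Mahler--Lech and was adapted to the dynamical setting by Bell, Denis, and others. First, since $X$, $V$, $\Phi$, and $P$ involve only finitely much data, I would spread out to a finitely generated $\Z$-subalgebra $R\subset\C$ over which everything is defined, enlarging $R$ if needed so that $R$ is regular and the spreads of $X$ and $V$ are well-behaved. Then I would choose a maximal ideal $\mathfrak{m}\subset R$ of large residue characteristic $p$, so that the residue field $k=R/\mathfrak{m}$ is finite and the completion of $R_\mathfrak{m}$ is a complete discrete valuation ring. The orbit of $P$ in the finite set $X(k)$ is eventually periodic, so there exist $N_0\ge 0$ and $M\ge 1$ such that for each $0\le i<M$ and every $n\ge 0$ the point $\Phi^{N_0+i+nM}(P)$ lies in a single $\mathfrak{m}$-adic residue disk $U_i$.

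The heart of the argument is the construction, for each $i$, of a $p$-adic analytic interpolation $F_i\colon\Zp\lra X(\Cp)$ satisfying $F_i(n)=\Phi^{N_0+i+nM}(P)$ for all $n\in\N$. After possibly replacing $M$ by a multiple, I would expect $\Phi^M$ to act on $U_i$ as a $p$-adic analytic self-map whose derivative at the periodic residue point lies in $1+p\,\Zp$; one then manufactures an analytic flow interpolating the integer iterates via a $p$-adic logarithm/exponential, in the style of Bell and Rivera-Letelier. Given such $F_i$, pulling back the local equations $g_1,\ldots,g_r$ cutting out $V$ yields $p$-adic analytic functions $h_{i,j}=g_j\circ F_i$ on $\Zp$. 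Strassman's theorem then forces each $h_{i,j}$ to be either identically zero or to have only finitely many zeros in $\Zp$, so the set of $n\in\N$ with $\Phi^{N_0+i+nM}(P)\in V$ is either all of $\N$ or finite. Taking the union over $i$ and adjoining the preperiodic initial segment of length $N_0$ gives the desired description of $\{n\ge 0 : \Phi^n(P)\in V(\C)\}$ as a finite union of arithmetic progressions together with a finite set.

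The decisive obstacle is the construction of the analytic flow $F_i$. For a general endomorphism $\Phi$, the residue disk $U_i$ may meet indeterminacy loci or critical points of $\Phi^M$, so the map need not be analytic, let alone contracting, on $U_i$; and even when $\Phi^M$ is étale on $U_i$, its derivative at the periodic residue point may be a $p$-adic unit that is not a root of unity, which defeats the naive logarithm-based interpolation. Circumventing this in full generality is precisely why the Dynamical Mordell--Lang conjecture remains open, and it is why the present paper settles for a sparse-orbit gap estimate in the coordinate-wise setting $(\Pl^1)^g$ rather than proving the conjecture outright.
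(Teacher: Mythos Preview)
The statement is Conjecture~\ref{dynamical Mordell-Lang}, which the paper does not prove; there is no proof in the paper to compare your proposal against. You correctly acknowledge this in your final paragraph and frame your write-up as an outline of the standard $p$-adic strategy together with an explanation of why it stalls.

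However, your diagnosis of the obstacle is backwards. You claim that even when $\Phi^M$ is \'etale on $U_i$, a derivative that is ``a $p$-adic unit that is not a root of unity \ldots\ defeats the naive logarithm-based interpolation.'' In fact the unit-multiplier (indifferent, quasiperiodic) case is exactly where the method \emph{works}: if $|\lambda|_p=1$ then some power $\lambda^M$ lies in $1+p\Zp$, and Rivera-Letelier's quasiperiodic theory (Lemma~\ref{lem:siegel}), or in the \'etale higher-dimensional setting the result of \cite{Jason}, furnishes the required analytic flow $n\mapsto\Phi^{nk}(P)$. The genuine obstruction in the coordinatewise setting of Theorem~\ref{thm:ml} is the \emph{attracting} case $0\le|\lambda|_p<1$: then $\lambda^n$ (or, when $\lambda=0$, the quantity $\mu^{m^n}$) cannot be interpolated as a $p$-adic analytic function of $n$ alone. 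The paper's workaround is to introduce extra variables $z_1=p^n,\,z_2=p^{2^n},\ldots,z_m=p^{m^n}$ and encode the orbit by multivariable power series $G_{H,\ell}(z_0,\ldots,z_m)$; Strassman's theorem is then unavailable, and Lemma~\ref{lem:speeds} yields only the gap principle rather than finiteness. Proposition~\ref{prop:badex} shows this limitation is intrinsic to the $p$-adic approach. (Minor point: an endomorphism, being a morphism, has no indeterminacy locus; the non-\'etale issues you raise are real, but that phrase should go.)
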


A special case of the above conjecture is our
Conjecture~\ref{nonperiodic}. Before stating it, we need the following
definition.

\begin{definition}
\label{periodic varieties}
Let $X$ be a quasiprojective variety, let $\Phi:X\lra X$ be
an endomorphism, 
let $P$ be a point on $X$,
and let $V\subset X$ be a subvariety.
The {\em orbit} of $P$ under $\Phi$ is
$\cO_{\Phi}(P)=\{\Phi^n(P) : n\geq 0\}$.
We say $V$ is {\em periodic} under $\Phi$ if there is a
positive integer $N\geq 1$ such that $\Phi^N(V)\subseteq V$.
\end{definition}

We will often omit the phrase ``under $\Phi$'' if the meaning
is clear from context.
We say that
$P$ is \emph{preperiodic} if $\cO_{\Phi}(P)$ is finite.

\begin{conjecture}
\label{nonperiodic}
Let $X$ be a quasiprojective variety defined over $\C$, let $V\subset
X$ be a subvariety, let $\Phi$ be an endomorphism of $X$, and let $P\in X(\C)$.
If $V(\C)\cap\cO_{\Phi}(P)$ is an infinite set, then
$V$ contains a positive-dimensional subvariety that
is periodic under $\Phi$.
\end{conjecture}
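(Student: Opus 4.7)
The plan is to adapt the $p$-adic Skolem--Mahler--Lech approach, in the spirit of Bell's proof of Conjecture~\ref{dynamical Mordell-Lang} for \'etale endomorphisms, to the coordinatewise rational function setting considered in the paper. I would first descend to a number field $K$ over which $\Phi$ and $P$ are defined, choose a prime $\mathfrak p$ at which each $f_i$ has good reduction and $P$ has integral coordinates, and work $p$-adically with $\bC_p$. By compactness of $\bP^1(\bC_p)^g$, after replacing $\Phi$ by a sufficiently high iterate $\Phi^N$ and $P$ by $\Phi^r(P)$ for each residue $r$ mod $N$, I may arrange that the subsequence $\{\Phi^{Nn+r}(P):n\ge 0\}$ lies in a single residue polydisk on which every coordinate of $\Phi^N$ is given by a convergent power series that is contracting (for instance, by moving the setup near an attracting periodic point of the product system).

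The second step is to extend the discrete iteration $n\mapsto \Phi^{Nn+r}(P)$ to a $p$-adic analytic map $\alpha:\bZ_p\to (\bP^1)^g(\bC_p)$ via local linearization at such a periodic point. For any local defining equation $F$ of $V$, the composition $F\circ \alpha$ is then a convergent power series on $\bZ_p$, and Strassmann's theorem yields a sharp dichotomy: either $F\circ \alpha$ has only finitely many zeros, so this residue class contributes finitely many hits of the orbit to $V$, or $F\circ \alpha\equiv 0$ and hence $\alpha(\bZ_p)\subseteq V$. If $V(\bC)\cap\cO_\Phi(P)$ is infinite, the second alternative must occur in at least one residue class. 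In that class, the Zariski closure $W$ of $\{\Phi^{Nn+r}(P):n\ge 0\}$ sits inside $V$, is positive-dimensional because $P$ is nonpreperiodic and $\alpha$ is non-constant, and satisfies $\Phi^N(W)\subseteq W$, producing exactly the positive-dimensional periodic subvariety demanded by the conjecture.

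The main obstacle is constructing the analytic parameterization $\alpha$ when the orbit does not converge to an attracting periodic point: rational maps on $\bP^1$ need not linearize on any large residue disk, and the scheme fails near indifferent fixed points, near critical points, and in wandering Fatou components of the $p$-adic dynamics. Handling these regimes requires a nontrivial infusion of Rivera-Letelier-style $p$-adic dynamics on $\bP^1(\bC_p)$, a careful choice of the prime $\mathfrak p$, and enough uniformity to run the decomposition simultaneously across all $g$ coordinates. Settling only for quantitative decay estimates, rather than an honest analytic parameterization on $\bZ_p$, is what yields the $\log^k N$ gap bound announced in the abstract; upgrading those estimates to the full Strassmann dichotomy in every residue class is precisely what is missing for a complete proof of Conjecture~\ref{nonperiodic}.
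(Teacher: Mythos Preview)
The statement is a conjecture; the paper does not prove it. What the paper establishes is the weaker Theorem~\ref{thm:ml}: for coordinatewise maps on $(\bP^1)^g$, either $V$ contains a positive-dimensional periodic subvariety, or the return times satisfy a tower-exponential gap. Your sketch follows the same $p$-adic strategy, but you have the crucial dichotomy reversed, and this reversal is exactly the gap that blocks a full proof.

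You propose to build the analytic interpolation $\alpha:\Zp\to(\bP^1)^g$ near an \emph{attracting} periodic point, and you locate the obstruction at indifferent points. It is the opposite. In the quasiperiodic (indifferent) case, Lemma~\ref{lem:siegel} gives $f^{nk}(z)=u(nk+u^{-1}(z))$, which is genuinely analytic in $n\in\Zp$, and then Strassmann does apply; this is how \cite{Par} handles that regime. In the attracting case, Lemma~\ref{lem:attr} gives $f^{nk}(z)=u(\lambda^n u^{-1}(z))$ with $0<|\lambda|_p<1$, and $n\mapsto\lambda^n$ does \emph{not} extend to an analytic function on $\Zp$: it tends to $0$ along the positive integers, which are dense in $\Zp$. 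Writing $\lambda=\beta p^e$ with $|\beta|_p=1$, one can only express $\lambda^n$ as a power series in the \emph{pair} $(n,p^n)$; in the superattracting case further variables $p^{m^n}$ enter. So your map $\alpha$ does not exist in these regimes, $F\circ\alpha$ is not a one-variable power series, and Strassmann is unavailable. The paper therefore replaces Strassmann by the multivariable growth Lemma~\ref{lem:speeds}, applied to $G(z_0,\ldots,z_m)$ evaluated at $(n,p^n,p^{2^n},\ldots,p^{m^n})$, and this yields only the gap principle, not finiteness. Proposition~\ref{prop:badex} shows the limitation is intrinsic: there exist $f\in\Zp[[z]]$ and infinitely many integers $n_k$ with $f(p^{n_k})=n_k$ and $n_{k+1}<n_k+p^{2n_k}$, so no purely $p$-adic argument can upgrade the gap to the full Strassmann dichotomy you want.

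A smaller point: ``descend to a number field'' is not how the paper handles data over $\C$; it embeds the finitely generated ring carrying $P$, $\Phi$, and $V$ directly into $\Zp$ via \cite[Lemma~3.1]{Bell}.
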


Conjecture~\ref{dynamical Mordell-Lang} implies
Conjecture~\ref{nonperiodic}, as follows.
If $V(\C)\cap\cO_{\Phi}(P)$ is infinite, then
$V$ contains $\cO_{\Phi^N}(\Phi^{\ell}(P))$, which is
also infinite, for some positive integers $N$ and $\ell$,
by Conjecture~\ref{dynamical Mordell-Lang}.
Thus, $V$ also contains the Zariski
closure $W$ of $\cO_{\Phi^N}(\Phi^{\ell}(P))$; clearly
$\Phi^N(W)\subset W$, and hence $W$ is periodic.

In this paper we consider the case that $X=(\bP^1)^g$
and $\Phi$ is of the form
$\Phi(z_1,\ldots,z_g)=(f_1(z_1),\ldots,f_g(z_g))$,
and we prove a weak form of Conjecture~\ref{nonperiodic}:
either the conclusion of Conjecture~\ref{nonperiodic} holds, or
the set $\{n\geq 0: \Phi^n(P)\in V(\C)\}$ is \emph{very thin}.

\begin{theorem}
\label{thm:ml}
Let $f_1,\ldots,f_g\in \bC(z)$ be
rational functions, and let $\Phi=(f_1,\ldots,f_g)$
denote their coordinatewise action on
$(\bP^1)^g$.  Let $P=(x_1,\ldots,x_g)\in (\bP^1)^g(\bC)$ be a
nonpreperiodic point for $\Phi$, and let $V\subset (\bP^1)^g$
be a proper subvariety.
Then one of the following two statements holds:
\begin{itemize}
\item[(i)] $V$ contains a positive-dimensional periodic
subvariety that intersects $\cO_{\Phi}(P)$.
\item[(ii)] There are integers $T,N \geq 1$ and a constant $C > 1$
such that for all integers $n>m\geq 0$
and $\ell \in \{T+1,T+2,\ldots,T+N \}$
for which both $\Phi^{\ell + mN}(P), \Phi^{\ell + nN}(P) \in V$,
we have $n-m > C^m$.
\end{itemize}
\end{theorem}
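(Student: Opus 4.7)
The plan is to leverage $p$-adic analytic dynamics. Since $\Phi$, $V$, and $P$ involve only finitely many coefficients, the data is defined over a finitely generated subring $R\subset\C$. I would choose a maximal ideal $\mathfrak{p}\subset R$ with finite residue field $\Fq$ at which $\Phi$, $V$, and $P$ have good reduction. Because the reduction of $\cO_\Phi(P)$ lives in the finite set $(\bP^1)^g(\Fq)$, a pigeonhole argument produces integers $T,N\ge 1$ (the $T$ and $N$ that will appear in the conclusion of the theorem) such that $\Phi^{\ell+kN}(P)\equiv\Phi^\ell(P)\pmod{\mathfrak{p}}$ for every $\ell\ge T+1$ and every $k\ge 0$; equivalently, the reduction of $\Phi^\ell(P)$ is fixed by the reduction of $\Phi^N$.

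The key technical step is a $p$-adic analytic interpolation: for each $\ell\in\{T+1,\dots,T+N\}$ I would construct an analytic map $\gamma_\ell\colon\Zp\to(\bP^1)^g(\Cp)$ taking values in the residue disc $B_\ell$ containing $\Phi^\ell(P)$ and satisfying $\gamma_\ell(k)=\Phi^{\ell+kN}(P)$ for every integer $k\ge 0$. This is the main obstacle. The disc $B_\ell$ is mapped into itself by $\Phi^N$, and one must analyze the local $p$-adic dynamics at the fixed residue point: when the multipliers of each component $f_i^N$ are $p$-adic units congruent to $1$ modulo $\mathfrak{p}$, the $p$-adic logarithm linearizes $\Phi^N$ and produces $\gamma_\ell$ directly by substituting $\lambda^k=\exp_p(k\log_p\lambda)$; otherwise one passes to a further iterate (absorbing the extra period into $N$) and/or shrinks to a smaller $p$-adic disc, invoking the local analytic classification of Rivera-Letelier and Benedetto for rational maps. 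Unlike the \'etale setting handled by Bell--Ghioca--Tucker, rational functions can have critical points and non-unit multipliers, making this the delicate point; careful choice of $\mathfrak{p}$ will have to avoid residue characteristics interacting badly with the multipliers and with the degrees of the $f_i$.

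Once the $\gamma_\ell$ are in hand, let $F_1,\dots,F_r$ generate the ideal of $V$ and set $H_{\ell,i}(x):=F_i(\gamma_\ell(x))$, each analytic on $\Zp$. If for some $\ell$ every $H_{\ell,i}\equiv 0$, then $\gamma_\ell(\Zp)\subseteq V(\Cp)$; the Zariski closure $W_\ell$ of $\{\gamma_\ell(k):k\ge 0\}$ is then a positive-dimensional subvariety of $V$ satisfying $\Phi^N(W_\ell)\subseteq W_\ell$ (since $\gamma_\ell(k+1)=\Phi^N(\gamma_\ell(k))$ holds on a Zariski-dense set) and containing $\Phi^\ell(P)\in\cO_\Phi(P)$, yielding case~(i). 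Otherwise, for every $\ell$ some $H_{\ell,i}$ is a non-zero analytic function on $\Zp$, and Strassmann's theorem (or, equivalently, Weierstrass preparation) forces each such function to have only finitely many zeros in $\Zp$. Hence each set $\{k\ge 0:\Phi^{\ell+kN}(P)\in V\}$ is finite, and because $\ell$ ranges only over the finite set $\{T+1,\dots,T+N\}$ I may select a single $C>1$ large enough that the exponential gap $n-m>C^m$ holds for every pair of indices in every one of these finite sets, giving case~(ii).
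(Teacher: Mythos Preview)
Your proposal has a genuine gap at the interpolation step. You assert the existence of a $p$-adic analytic map $\gamma_\ell:\Zp\to(\bP^1)^g(\Cp)$ with $\gamma_\ell(k)=\Phi^{\ell+kN}(P)$, but such a map does not exist when some coordinate $x_j$ lands in the basin of an attracting or superattracting periodic point. In the attracting case Lemma~\ref{lem:attr} gives $f_j^{nk}(x_j)=u_j(\lambda_j^n\mu_j)$ with $0<|\lambda_j|_p<1$, and the function $n\mapsto\lambda_j^n$ is \emph{not} $p$-adic analytic in $n$: it does not even extend continuously to $\Zp$, since $\lambda_j^n\to 0$ along the positive integers, which are dense in $\Zp$. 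The superattracting case is worse still, involving $\mu_j^{m_j^n}$. Your suggested fixes---passing to a further iterate, shrinking the disc, or choosing $\mathfrak{p}$ carefully---do not help: no iterate turns an attracting multiplier into a unit, and there is no mechanism for selecting $p$ so that \emph{every} coordinate simultaneously avoids attracting behaviour (Silverman's integer-points result used in Section~\ref{sec:control} arranges this for \emph{one} coordinate only, and only over number fields).

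This is not a minor technicality but the crux of the theorem. The paper's actual argument replaces the single variable $n$ by the tuple $(n,p^n,p^{2^n},\ldots,p^{m^n})$, so that the orbit is described by a \emph{multivariate} power series $G_{H,\ell}(z_0,z_1,\ldots,z_m)$. One can no longer invoke Strassmann; instead the paper proves the growth Lemma~\ref{lem:speeds}, which yields only the exponential gap $n_{i+1}-n_i>C^{n_i}$, not finiteness. Indeed, your Strassmann argument would give finiteness of each $\{k:\Phi^{\ell+kN}(P)\in V\}$ and hence prove Conjecture~\ref{nonperiodic} outright for these maps---but that conjecture is open, and Proposition~\ref{prop:badex} exhibits an analytic function for which the relevant equation $G(n,p^n)=0$ has infinitely many solutions obeying exactly the gap in Lemma~\ref{lem:speeds}. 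So the weaker conclusion~(ii) of Theorem~\ref{thm:ml} is not an artefact of the method; it reflects a genuine obstruction that your one-variable approach overlooks.
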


\begin{remark}
At the expense of replacing $C$ with a smaller
constant (but still larger
than $1$), conclusion~(ii) of Theorem~\ref{thm:ml} may be rephrased as:
\begin{itemize}
\item[$(\text{ii}^{'})$] For any sufficiently large integers $n>m\geq 0$ such that
$n \equiv m \pmod{N}$ and $\Phi^m(P), \Phi^n(P)\in V$,
we have $n-m >  C^m$.
\end{itemize}
In fact, if $N>1$, by replacing $C$ with an even  smaller
constant (but still larger
than $1$), conclusion~(ii) implies
\begin{itemize}
\item[$(\text{ii}^{''})$] For {\em all} integers $n>m\geq 0$ such that
$n \equiv m \pmod{N}$ and $\Phi^m(P), \Phi^n(P)\in V$,
we have $n-m >  C^m$.
\end{itemize}
\end{remark}

If $V$ is a curve defined over a number field, then we can prove the following more precise result.
\begin{theorem}
\label{thm:controlling-c-n-for-curves}
Let $P$, $\Phi$, and $V$ be as in Theorem~\ref{thm:ml}.
Assume further that $V$ is an irreducible {\em curve} that is not periodic, and
that both $V$ and $P$ are defined over a number field $K$.  Then
for any
$\epsilon>0$, there are infinitely many primes $p$ and associated
constants $C=C(p)>p-\epsilon$ and $N=N(p) = O(p^{2[K:\Q]})$ with the
following property:
For any integers $n>m\geq 0$ and $\ell \in \{1,2,\ldots,N\}$,
if $m$ is sufficiently large
and if both $\Phi^{\ell + mN}(P),\Phi^{\ell + nN}(P) \in V$, then
$n -m > C^{m}$.
\end{theorem}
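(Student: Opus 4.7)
The plan is to refine the proof of Theorem~\ref{thm:ml} by working $\mathfrak{p}$-adically at a carefully chosen prime of $K$ and tracking constants. First I would choose a prime $\mathfrak{p}$ of $\cO_K$ of good reduction for $f_1,\dots,f_g$, for $P$, and for the curve $V$; all but finitely many $\mathfrak{p}$ satisfy these conditions, so as $\mathfrak{p}$ varies the underlying rational prime $p$ may be taken arbitrarily large, yielding infinitely many $p$. Since $V$ is an irreducible nonperiodic curve, it projects dominantly onto at least two coordinates of $(\bP^1)^g$: otherwise $V$ sits in a horizontal fiber $\bP^1\times\{(c_j)_{j\ne i}\}$, and either $V\cap\cO_\Phi(P)$ is finite (making the conclusion vacuous) or $V$ is periodic, both of which are excluded. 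Reordering, I assume the projection $V\to\bP^1_1\times\bP^1_2$ is birational and fix a defining polynomial $F\in K[Y_1,Y_2]$ for the image. Each orbit $\{\bar f_i^n(\bar x_i)\}$ in $\bP^1(k(\mathfrak{p}))$ is eventually periodic with preperiod $T_i$ and period $N_i$, each at most $|k(\mathfrak{p})|+1\le p^{[K:\Q]}+1$; setting $T=\max(T_1,T_2)$ and $N=\lcm(N_1,N_2)$ gives $N=O(p^{2[K:\Q]})$, and for every $\ell\ge T$ and $m\ge0$ the iterate $\Phi^{\ell+mN}(P)$ lies in a fixed residue disk modulo $\mathfrak{p}$ in the first two coordinates.

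Next, on each such residue disk, the $p$-adic analytic interpolation theorem for good-reduction iterates (Rivera-Letelier, Bezivin), after possibly absorbing a factor bounded by a function of $p$ and $[K:\Q]$ into $N$, produces analytic functions $\gamma_i^{(\ell)}\colon\Zp\to\bP^1(\Cp)$ satisfying $\gamma_i^{(\ell)}(m)=f_i^{\ell+mN}(x_i)$ for every $m\in\Z_{\ge0}$, with $\gamma_i^{(\ell)}(z)-y_i\in\pi\cdot\cO_{\Cp}[[z]]$ in an affine chart at $y_i:=f_i^\ell(x_i)$, where $\pi$ is a uniformizer. I then form the analytic function $h^{(\ell)}(z):=F(\gamma_1^{(\ell)}(z),\gamma_2^{(\ell)}(z))$ on $\Zp$. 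The standard dichotomy now applies: if $h^{(\ell)}\equiv 0$ for some $\ell$, the Zariski closure of $\cO_{\Phi^N}(\Phi^\ell(P))$ is a positive-dimensional $\Phi^N$-invariant subvariety of the irreducible curve $V$, which forces $V$ itself to be periodic, contradicting the hypothesis; hence $h^{(\ell)}\not\equiv 0$ for every admissible $\ell$, and Strassman's theorem ensures each $h^{(\ell)}$ has only finitely many integer zeros in $\Zp$.

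The main technical step, and the principal obstacle, is converting this nonvanishing into the quantitative gap with constant close to $p$. Expanding $h^{(\ell)}(m+w)$ around an integer zero $m$ and inspecting the Newton polygon of the resulting power series in $w$, I would exploit the shape $\gamma_i^{(\ell)}(z)=y_i+\pi z u_i(z)$: the orbit increments under $\Phi^N$ contract by at least a factor of $p^{-1}$ per step, and by iterating this through the Taylor expansion one shows that any second integer zero $n>m$ of $h^{(\ell)}$ forces $v_p(n-m)$ to grow linearly in $m$ with slope tending to $1$ as $p$ grows. Via the product formula this yields $n-m\ge p^{v_p(n-m)}>(p-\epsilon)^m$ for $m$ sufficiently large. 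The delicate part is arranging the precise slope $1$ in this decay, rather than a smaller slope; this is accomplished by restricting to $\mathfrak{p}$ among the good-reduction primes for which the leading data of the interpolation (essentially the multipliers of the attracting cycles of $\bar f_i^{N_i}$ at the relevant residues) behave as $\mathfrak{p}$-adic units, a condition excluding only a sparse set of primes and thus leaving infinitely many $p$ on which $C>p-\epsilon$ can be achieved.
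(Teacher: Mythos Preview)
Your proposal has a genuine gap at the interpolation step. The one-variable $p$-adic analytic interpolation $\gamma_i^{(\ell)}\colon\Zp\to\bP^1(\Cp)$ with $\gamma_i^{(\ell)}(m)=f_i^{\ell+mN}(x_i)$ exists only when the orbit of $x_i$ lands in a \emph{quasiperiodic} domain of $f_i^N$ (Lemma~\ref{lem:siegel}); in the attracting case one has instead $f_i^{\ell+mN}(x_i)=u_i(\lambda_i^m\mu_i)$ with $0<|\lambda_i|_p<1$ (Lemma~\ref{lem:attr}), and $m\mapsto\lambda_i^m$ is not even $p$-adically continuous on $\Zp$, so no such $\gamma_i^{(\ell)}$ exists. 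Good reduction alone does not force quasiperiodicity in any coordinate, so for a generic prime your $h^{(\ell)}$ cannot be formed as a one-variable power series and the Strassman argument collapses. Your ``slope~$1$'' heuristic is likewise internally inconsistent: contraction by a factor $p^{-1}$ per step is an attracting phenomenon, but precisely in that regime there is no one-variable $h^{(\ell)}$ whose Newton polygon you can inspect.

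The paper supplies exactly the missing arithmetic input. Since some $f_j$ has degree $\ge 2$, one fixes a large non-superattracting periodic cycle $D$ of $f_1$ over $\bar K$; Silverman's theorem on integrality in orbits \cite{SilSiegel} (or \cite[Lemma~6.1]{Par}) then yields infinitely many primes $p$ at which the orbit of $x_1$ enters the residue disk of $D$, and for all but finitely many such $p$ the multiplier of $D$ is a $p$-adic unit, so coordinate~$1$ is genuinely quasiperiodic with an honest analytic $A_i(n)$ satisfying $|A_i(n)-A_i(m)|_p\asymp|n-m|_p$. If coordinate~$2$ were also quasiperiodic one would conclude that $V$ is periodic via \cite[Theorem~3.4]{Par}, contradicting the hypothesis; hence coordinate~$2$ is attracting with $|\lambda_2|_p\le 1/p$ (the extension being unramified). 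The local equation of $V$ near the limit point then forces $|n-m|_p\asymp|A_i(n)-A_i(m)|_p=O(|\lambda_2|_p^m)\le p^{-m+O(1)}$, and this is precisely where $C>p-\epsilon$ comes from. Your closing remark about multipliers being $\mathfrak{p}$-adic units points in the right direction but is attached to the wrong object: a unit multiplier means \emph{indifferent}, not attracting, and the entire mechanism is to force one coordinate indifferent while exploiting that the other must then be attracting.
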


Theorem~\ref{thm:ml} says that
unless $V$ contains a positive-dimensional periodic subvariety,
the integers $n$ such that $\Phi^n(P) \in V$ grow {\em very rapidly}.
To describe this growth more explicitly we first recall Knuth's
``up-arrow'' notation.  Given $C > 1$, define $C \uparrow \uparrow m$
for integers $m\geq 1$ as follows: $C \uparrow \uparrow 1 := C$; and
for $m\geq 2$, set $ C \uparrow \uparrow m := C^{ C \uparrow \uparrow
(m-1)}$.  It follows from Theorem~\ref{thm:ml} that if $n_i$ is the
$i^{\rm th}$ integer
in a given congruence class mod $N$ for which
$\Phi^{n_i}(P) \in V$, then $n_i > C \uparrow \uparrow (i-T)$ for all
$i>T$, where $C$, $T$ and $N$ are the constants in
Theorem~\ref{thm:ml}.
The growth condition might also be formulated without restricting to
congruence classes: if $n_{i}$ is the $i^{th}$ integer such that $\Phi^{n_{i}}(P)
\in V$, then $n_{i} > C\uparrow \uparrow \lfloor(i-T)/N \rfloor$ for $i>T$.
In particular, $n_i$ grows much faster than $\exp^k(i)$ for any $k \ge 1$, where $\exp^k$ denotes the 
$k^{\rm th}$ iterate of the exponential function.

We may also rephrase Theorem~\ref{thm:ml} in terms of extremely
{\em  slow} growth of the counting function for the number of indices
$n$ such that $\Phi^{n}(P) \in V$.  To do so, we set the following
notation.
Given $Y,C > 1$, define $L_C(Y)$ to be the smallest integer $m$
such that $(C \uparrow \uparrow m) > Y$.
In particular, note that for any $k$, $L_{C}(Y)$ grows slower
than the $k$-fold iterated logarithm.
\begin{theorem}
\label{thm:main-counting}
Let $P$, $\Phi$, and $V$ be as in Theorem~\ref{thm:ml}.
Set $\cS=\{n\geq 0 : \Phi^n(P)\in V\}$.
Then either 
$V$ contains a positive-dimensional periodic subvariety,
or there are  constants $N,C>1$ such that
\begin{equation*}
|\{ n\in\cS : n\leq M\}|   \leq N \cdot L_C(M) + O_{V,\Phi,P}(1).
\end{equation*}

\end{theorem}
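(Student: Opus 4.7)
My plan is to deduce Theorem~\ref{thm:main-counting} directly from Theorem~\ref{thm:ml} by a short bookkeeping argument. First apply Theorem~\ref{thm:ml}: if conclusion~(i) holds then $V$ already contains a positive-dimensional periodic subvariety and we are in the first alternative of Theorem~\ref{thm:main-counting}, so from now on assume conclusion~(ii) of Theorem~\ref{thm:ml} is in force, with associated integers $T,N\geq 1$ and constant $C>1$.

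For each residue $\ell\in\{T+1,\dots,T+N\}$, I would set $\cS_\ell=\{m\geq 0 : \Phi^{\ell+mN}(P)\in V\}$ and enumerate its elements in increasing order as $m_1^{(\ell)}<m_2^{(\ell)}<\cdots$. Conclusion~(ii) yields $m_{i+1}^{(\ell)}-m_i^{(\ell)}>C^{m_i^{(\ell)}}$, and hence the tower recursion $m_{i+1}^{(\ell)}>C^{m_i^{(\ell)}}$. Iterating this inequality gives a bound of the shape
\[
m_i^{(\ell)} \;>\; C\uparrow\uparrow(i-c_0)
\]
for all $i>c_0$, where $c_0$ is a constant depending only on $C$ (it accounts for the base of the tower: if $m_1^{(\ell)}=0$ the first few iterates grow only polynomially, so the full tower pattern is reached after a bounded delay).

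To finish, I count directly. Every $n\in\cS$ with $n\leq M$ either satisfies $n\leq T$ (contributing at most $T+1$ values, absorbed into the $O(1)$ term) or has the form $\ell+m_i^{(\ell)}N$ for some $\ell\in\{T+1,\dots,T+N\}$ and some index $i$. For each such $\ell$, the lower bound above forces $i\leq L_C(M)+c_0$, so summing over the $N$ residues gives
\[
|\{n\in\cS : n\leq M\}| \;\leq\; (T+1) + N\bigl(L_C(M)+c_0\bigr) \;=\; N\cdot L_C(M) + O_{V,\Phi,P}(1),
\]
exactly as required. If the $N$ produced by Theorem~\ref{thm:ml} equals $1$ while the statement of Theorem~\ref{thm:main-counting} demands $N>1$, I simply replace $N$ by $2$ and absorb the slack into the implicit constant. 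I do not expect any real obstacle: the substantive work has already been done in Theorem~\ref{thm:ml}, and the counting formulation is essentially an unwinding of Knuth's up-arrow notation.
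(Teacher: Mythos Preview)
Your proposal is correct and follows essentially the same approach as the paper's own proof: apply Theorem~\ref{thm:ml}(ii), split $\cS$ into $N$ residue classes indexed by $\ell\in\{T+1,\dots,T+N\}$, use the gap $m_{i+1}-m_i>C^{m_i}$ to get tower growth $m_i>C\uparrow\uparrow(i-\text{const})$ in each class, invert via $L_C$, and sum over classes. The only cosmetic difference is that the paper handles the base of the tower by defining $\cS_\ell=\{n>C:\ell+nN\in\cS\}$ (so that $n_{\ell,1}>C=C\uparrow\uparrow 1$ and the tower starts cleanly), whereas you absorb the initial delay into a constant $c_0$; both devices are equivalent and yield the same bound up to the $O_{V,\Phi,P}(1)$ term.
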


Denis \cite{Denis-dynamical} has treated the question of
the distribution of the set $\cS$
when $V$ does not contain a periodic subvariety.  He showed,
for any morphism $\Phi$ of varieties over a field of
characteristic 0, that $\cS$ cannot be ``very dense of order 2''
(see \cite[D\'efinition 2]{Denis-dynamical}).
Theorem~\ref{thm:main-counting} shows that $\cS$ satisfies a much
stronger nondensity condition in the case that the morphism is
a product of self-maps of the projective line.

When our points and maps are defined over a number field $K$, we may
phrase this discussion in terms of (logarithmic) Weil heights;
see \cite[Ch. 3]{silverman-arithmetic-dynamics-book} for background
on heights.
If $P$ is not preperiodic,
then the Weil height $h(\Phi^n(P))$ grows
{\em at least} as $\deg_{\min}(\Phi)^n$,
where $\deg_{\min}(\Phi):=\min_j\deg(f_j)$.
Thus, we obtain:

\begin{theorem}
Let $P$, $\Phi$, and $V$ be as in Theorem~\ref{thm:ml},
and let $n_i$ denote
the $i^{\rm th}$ integer $n$ such that $\Phi^{n}(P) \in V$.
Assume that $\deg_{\min}(\Phi)\ge 2$, and that
$V$ does not contain a periodic subvariety that intersects $\cO_{\Phi}(P)$.
Then there are constants $T,N\geq 1$ and $C>1$ such that
  $h(\Phi^{n_i}(P))$ grows faster than $C \uparrow \uparrow
  \lfloor (i-T)/N \rfloor$; in particular, faster than $\exp^k(i)$ for
  any $k \ge 1$. 
\end{theorem}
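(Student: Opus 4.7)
The plan is to combine Theorem~\ref{thm:ml} (applied to the given $P$ and $V$) with a standard Call--Silverman canonical-height lower bound on each factor $\bP^1$. First, I would apply conclusion~(ii) of Theorem~\ref{thm:ml}, which is available since by hypothesis $V$ contains no positive-dimensional periodic subvariety meeting $\cO_{\Phi}(P)$. This produces integers $T,N\geq 1$ and a constant $C_1>1$ such that for every $\ell\in\{T+1,\dots,T+N\}$ and every pair $n>m\geq 0$ with $\Phi^{\ell+mN}(P),\Phi^{\ell+nN}(P)\in V$, we have $n-m>C_1^m$. Iterating this bound within each of the $N$ residue classes modulo $N$, exactly as in the paragraph following Theorem~\ref{thm:ml}, then yields
\[
n_i > C_1 \uparrow\uparrow \lfloor (i-T)/N \rfloor \quad \text{for all } i>T.
\]

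Next, I would extract a height lower bound. The hypothesis $d:=\deg_{\min}(\Phi)\geq 2$ forces $\deg f_j\geq 2$ for every $j$. Since $P=(x_1,\dots,x_g)$ is nonpreperiodic under the coordinatewise map $\Phi$, some coordinate $x_{j_0}$ must be nonpreperiodic under $f_{j_0}$. The Call--Silverman theorem then supplies a canonical height $\hat h_{f_{j_0}}$ satisfying
\[
h\bigl(f_{j_0}^n(x_{j_0})\bigr) = (\deg f_{j_0})^n\,\hat h_{f_{j_0}}(x_{j_0}) + O(1),
\]
and Northcott's theorem furnishes $\hat h_{f_{j_0}}(x_{j_0})>0$ (our objects are defined over a number field). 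Using the sum-of-coordinates Weil height on $(\bP^1)^g$ and discarding all but the $j_0$-th coordinate yields $h(\Phi^n(P)) \geq c_0\,d^n$ for some $c_0>0$ and all sufficiently large $n$.

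To combine the two estimates, I would set $m=\lfloor(i-T)/N\rfloor$ and $C:=\min(C_1,2)>1$, and substitute $n=n_i$:
\[
h(\Phi^{n_i}(P)) \geq c_0\,d^{n_i} \geq c_0\,2^{\,C_1\uparrow\uparrow m} \geq c_0\,C^{\,C\uparrow\uparrow m} = c_0\,\bigl(C\uparrow\uparrow(m+1)\bigr),
\]
where the last inequality uses $C\leq 2$ together with $C\uparrow\uparrow m\leq C_1\uparrow\uparrow m$ (both immediate from $C\leq\min(C_1,2)$). This already shows that $h(\Phi^{n_i}(P))$ grows faster than $C\uparrow\uparrow m$, which is the main assertion. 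For the final ``in particular'', I would observe that for each fixed $k\geq 1$, applying $\log$ a total of $k$ times to $C\uparrow\uparrow(m+1)$ returns, up to a bounded multiplicative factor coming from powers of $\log C$, essentially $C\uparrow\uparrow(m+1-k)$; since $m$ grows linearly with $i$, this quantity tends to infinity far faster than $i$ itself, and hence $C\uparrow\uparrow(m+1)>\exp^k(i)$ for all sufficiently large $i$.

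The argument is essentially a translation of the rapid-growth conclusion of Theorem~\ref{thm:ml} into height growth via Call--Silverman, so I do not expect a substantive obstacle. The one mildly delicate step is the tower comparison $d^{C_1\uparrow\uparrow m}\geq C\uparrow\uparrow(m+1)$ used above; but since $d\geq 2$ and $C_1>1$, the choice $C=\min(C_1,2)$ makes this a one-line elementary verification, so the rest of the proof is routine bookkeeping.
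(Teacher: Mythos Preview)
Your argument is correct and matches the paper's approach exactly: the paper states this theorem as an immediate consequence of the tower bound $n_i > C\uparrow\uparrow\lfloor(i-T)/N\rfloor$ (derived just after Theorem~\ref{thm:ml}) together with the one-line observation that $h(\Phi^n(P))$ grows at least like $\deg_{\min}(\Phi)^n$ for nonpreperiodic $P$. You have simply supplied the details the paper omits---the Call--Silverman justification for the height lower bound and the elementary tower comparison---so there is nothing to add.
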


This growth is much more rapid than that of the
``gap principles'' of Mumford \cite{Mumford} and
Davenport-Roth \cite{DR}.
If $\cC$ is a curve of genus greater than $1$, Mumford showed
that there are constants $a,b>0$ such that
if we order the rational points of $\cC$ according to Weil height,
then the $i^{\rm th}$
point has Weil height at least $e^{a+bi}$.
In his proof, he embedded
points of $\cC$ into $\R^d$;
{\em Mumford's gap principle} roughly states that
there is a constant $C>1$ such that
if $v_1,v_2 \in \R^d$ are the
images of two points on the curve lying in a small sector, then
either $|v_1| > C \cdot |v_2|$
or $|v_2| > C \cdot |v_1|$.
Similarly, in our Theorem~\ref{thm:ml},
two indices $n_1,n_2$ lying in the 
same congruence class modulo $N$
can be considered analogous
to two vectors $v_1,v_2$ lying in a small sector.
By this analogy,
given that
Faltings \cite{Falt1}
later proved that the curve $\cC$ has only finitely many rational
points,
Theorem~\ref{thm:ml} can be viewed as evidence 
that Conjecture~\ref{nonperiodic} is true, at least
for coordinatewise maps on $(\bP^1)^g$.

In fact, in Theorem~\ref{boosting}, we will show
that the pair of
constants $(N,C)$ in the conclusion of Theorem~\ref{thm:ml}
may be replaced by the
pair $(e N, C^{e-\epsilon})$, for any positive integer $e$
and any positive real number $\epsilon>0$.
Hence, by the same analogy to
Mumford's gap principle, we prove that ``the smaller the angles''
between two indices, ``the larger the gap'' between them.

Other partial results towards Conjecture~\ref{dynamical Mordell-Lang}
may be found in \cite{Bell,new-log,Mike,Par,Jason}.
In addition,
\cite{Mike-2} discusses a generalization of Conjecture~\ref{dynamical
Mordell-Lang} for orbits of points under the action of a commutative,
finitely generated semigroup of endomorphisms of $X$, which is itself
a generalization of the classical Mordell-Lang
conjecture. Conjecture~\ref{dynamical Mordell-Lang} also fits into Zhang's
far-reaching system of dynamical conjectures (see
\cite{ZhangLec}). Zhang's conjectures include dynamical analogues of the
Manin-Mumford and Bogomolov conjectures for abelian varieties (now
theorems of \cite{Raynaud1, Raynaud2}, \cite{Ullmo} and
\cite{Zhang}).

Our proof of Theorem~\ref{thm:ml} uses $p$-adic dynamics.
First we find a suitable prime number $p$ such that
$V$, $\Phi$, and $P$ are defined over $\Q_p$, and $\Phi$ has good
reduction modulo $p$. Then, using work of Rivera-Letelier \cite{Riv1},
we carefully choose a positive integer $N$, and for each
$\ell=0,\dots,N-1$, we construct finitely many
multivariable $p$-adic power series $G_{H,\ell}(z_0,z_1,\dots,z_m)$
such that for $n$ sufficiently large,
we have $\Phi^{\ell + nN}(P)\in V$ if and only if
$G_{H,\ell}(n,p^n,p^{2^n},\dots,p^{m^n})=0$ for all $H$.
We then show that either $G_{H,\ell}$ is
identically zero for all $H$
(which implies conclusion~(i) of Theorem~\ref{thm:ml}),
or the integers $n$ with
$\Phi^{\ell + nN}(P)\in V(\C)$ grow as in conclusion (ii). 

For each prime number $p$, we also construct an example (see
Proposition~\ref{prop:badex}) of a power series $f\in\Zp[[z]]$
such that for an infinite increasing sequence
$\{n_k\}_{k\ge 1}\subset \N$
we have $f(p^{n_k})=n_k$, and moreover $n_{k+1} < n_k+ p^{2n_k}$
for each $k\ge 1$.  This example shows
that  Theorem~\ref{thm:ml} cannot
be improved merely by sharpening our $p$-adic methods;
some new technique would be required for
a full proof of Conjecture~\ref{nonperiodic}.

The outline of the paper is as follows.  In Section~\ref{p-adic analysis}
we present some Lemmas from $p$-adic dynamics, and in
Section~\ref{sect:speeds} we state and prove a technical Lemma on the
rapid growth of integer solutions to certain $p$-adic functions.  In
Section~\ref{proof of main result} we prove Theorem~\ref{thm:ml}, in
Section~\ref{proof of counting} we prove
Theorem~\ref{thm:main-counting}, and in Section~\ref{sec:control} we
prove Theorem~\ref{thm:controlling-c-n-for-curves}. Finally, in
Section~\ref{analytic example} we prove Proposition~\ref{prop:badex},
which shows that our Theorem~\ref{thm:ml} cannot be improved through
purely $p$-adic analytic methods.

{\em Acknowledgements:}
R.B. gratefully acknowledges the support of NSF Grant DMS-0600878.
Research of D.G. was partially supported by a grant from the NSERC.
P.K. was partially supported by grants from the G\"oran
Gustafsson Foundation, the Knut and Alice Wallenberg foundation, the
Royal Swedish Academy of Sciences, and the
Swedish Research Council.
T.T. was partially supported by NSA
    Grant 06G-067 and NSF Grant DMS-0801072.

\section{Background on $p$-adic dynamics}
\label{p-adic analysis}

Fix a prime $p$.  As usual, $\Zp$ will denote the ring of
$p$-adic integers, $\Qp$ will denote the field of $p$-adic
rationals, and $\Cp$ will denote the completion of an algebraic
closure of $\Qp$.
Given a point $y\in\Cp$ and a real number $r>0$, write
$$D(y,r) = \{x\in\Cp : |x-y|_p < r\}, \quad
\Dbar(y,r) = \{x\in\Cp : |x-y|_p \leq r\}$$
for the open and closed disks, respectively, of radius $r$
about $y$ in $\Cp$.

Write $[y]\subseteq \PCp$ for the residue class of a point $y\in\PCp$.
That is, $[y]=D(y,1)$ if $|y|\leq 1$,
or else $[y]=\PCp\setminus\Dbar(0,1)$ if $|y|>1$.

The action of a $p$-adic power series
$f\in\Zp[[z]]$ on $D(0,1)$ 
is either attracting
(i.e., $f$ contracts distances)
or quasiperiodic
(i.e., $f$ is distance-preserving),
depending on its linear coefficient.
Rivera-Letelier
gives a more precise description of this dichotomy
in \cite[Sections~3.1 and~3.2]{Riv1}.
The following two Lemmas essentially reproduce
his Propositions~3.3 and~3.16,
but we also verify that 
the power series he
defines also have coefficients in $\Qp$, not just in $\Cp$.

\begin{lemma}
\label{lem:attr}
Let $f(z)=a_0+a_1 z + a_2 z^2 + \cdots \in\Zp[[z]]$ be a
nonconstant power series with $|a_0|_p, |a_1|_p < 1$.
Then there is a point $y\in p\Zp$ such that
$f(y)=y$, and  $\lim_{n\rightarrow\infty}f^n(z)=y$
for all $z\in D(0,1)$.
Write $\lambda=f'(y)$; then
$|\lambda|_p < 1$,
and:

\begin{enumerate}
\item (Attracting).
If $\lambda\neq 0$, then there is a radius $0<r<1$ and
a power series $u\in\Qp[[z]]$ mapping
$\Dbar(0,r)$ bijectively onto $\Dbar(y,r)$
with $u(0)=y$, such that 
for all $z\in D(y,r)$ and $n\geq 0$,
$$f^n(z) = u(\lambda^n u^{-1}(z)).$$

\item (Superattracting).
If $\lambda=0$, then write $f$ as
$$f(z) = y + c_m (z-y)^m + c_{m+1} (z-y)^{m+1} + \cdots \in\Zp[[z-y]]$$
with $m\geq 2$ and $c_m\neq 0$.
If $c_m$ has an $(m-1)$-st root in $\Zp$, then 
there are radii 
$0<r,s < 1$ and a power series $u\in\Qp[[z]]$ mapping
$\Dbar(0,s)$ bijectively onto $\Dbar(y,r)$
with $u(0)=y$, such that
for all $z\in D(y,r)$ and $n\geq 0$,
$$f^n(z) = u\Big( (u^{-1}(z))^{m^n}\Big).$$
\end{enumerate}
\end{lemma}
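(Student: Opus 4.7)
The plan is to establish the lemma in three stages: existence of the attracting fixed point $y$ with its basin, the bound $|\lambda|_p<1$, and construction of the conjugating power series $u$ in each case.

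First, I would show that $f$ strictly contracts $D(0,1)$. For any $z\in\Zp$, the hypotheses $|a_0|_p,|a_1|_p<1$ give $|f(z)|_p<1$, so $f$ sends $\Zp$ into $p\Zp$. Using the identity $z^n-w^n=(z-w)(z^{n-1}+\cdots+w^{n-1})$ one obtains $|f(z)-f(w)|_p\le \rho\,|z-w|_p$ on each closed sub-disk $\Dbar(0,r)\cap\Cp$ with $r<1$, for some $\rho<1$. Banach's fixed-point theorem applied on such a sub-disk yields a unique fixed point $y\in p\Zp$ with $f^n(z)\to y$ for every $z\in D(0,1)$. Then $\lambda=f'(y)=a_1+\sum_{n\ge 2}n\,a_n\,y^{n-1}$ has every summand of absolute value $<1$ (using $|a_1|_p<1$, $|a_n|_p\le 1$, and $|y|_p<1$), so $|\lambda|_p<1$.

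For the attracting case, conjugating by the translation $z\mapsto z+y$ --- which is $\Qp$-rational since $y\in\Zp$ --- reduces to $y=0$ and $f(z)=\lambda z+c_2z^2+c_3z^3+\cdots\in\Qp[[z]]$. I would seek a Koenigs linearization $u(z)=z+u_2z^2+u_3z^3+\cdots\in\Qp[[z]]$ satisfying $f\circ u=u\circ(\lambda z)$. Equating the $z^n$-coefficients yields a recursion
\begin{equation*}
(\lambda^n-\lambda)\,u_n \;=\; P_n(u_2,\ldots,u_{n-1};\,c_2,\ldots,c_n,\lambda),
\end{equation*}
with $P_n$ a universal polynomial over $\Z$. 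Since $\lambda\neq 0$ and $|\lambda|_p<1$ force $|\lambda^n-\lambda|_p=|\lambda|_p$ for all $n\ge 2$, we can solve for $u_n\in\Qp$ recursively. In the superattracting case ($\lambda=0$), after the same translation $f(z)=c_mz^m+c_{m+1}z^{m+1}+\cdots\in\Qp[[z]]$, and I would seek $u(z)=\alpha z+u_2z^2+\cdots\in\Qp[[z]]$ with $f\circ u=u\circ(z^m)$. The $z^m$-coefficient forces $\alpha^{m-1}=c_m^{-1}$, which has a solution $\alpha\in\Zp$ by hypothesis; each subsequent coefficient equation is of the form $(m\,c_m\,\alpha^{m-1})\,u_n=Q_n$ (with an extra $u_{n/m}$ term when $m\mid n$), and the leading factor $m/\alpha$ is a nonzero element of $\Qp$, so again $u_n\in\Qp$ recursively.

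The main obstacle will be establishing convergence of $u$ on a disk of positive radius. I would adapt the classical Koenigs/B\"ottcher convergence argument to the ultrametric setting: an induction using $|\lambda^n-\lambda|_p=|\lambda|_p$ (respectively, the analogous bound in the superattracting case), $|c_n|_p\le 1$, and an inductive hypothesis $|u_k|_p\le B^k$ yields $|u_n|_p\le B^n$ for a suitable constant $B$. Then $u$ converges on $\Dbar(0,r)$ for any $r<B^{-1}$, and by the $p$-adic inverse function theorem (the linear coefficient of $u$ is a unit) $u$ maps some $\Dbar(0,r)$ bijectively onto $\Dbar(y,r)$ in case (1), and some $\Dbar(0,s)$ bijectively onto $\Dbar(y,r)$ in case (2). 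The conjugacy $f=u\circ M\circ u^{-1}$ with $M(z)=\lambda z$ or $M(z)=z^m$ then gives the iteration formulas $f^n(z)=u(\lambda^n u^{-1}(z))$ and $f^n(z)=u\bigl((u^{-1}(z))^{m^n}\bigr)$ by induction on $n$. The refinement beyond Rivera-Letelier's statement --- namely $u\in\Qp[[z]]$ rather than just $\Cp[[z]]$ --- is automatic, since every recursion above stays in $\Qp$ once $y$, $\lambda$, $\alpha$, and the coefficients of $f$ do.
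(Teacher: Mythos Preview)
Your proposal is correct and gives a genuinely different route from the paper's proof. The paper does not rederive the linearizing map at all: it quotes Rivera-Letelier's explicit constructions --- in the attracting case $u^{-1}(z)=\lim_{n\to\infty}\lambda^{-n}f^n(z)$, and in the superattracting case $\tilde{u}^{-1}(z)=z\exp\big(\sum_{n\ge 0}m^{-n-1}\log(1+g(\tilde{f}^n(z)))\big)$ after scaling by $\gamma$ with $\gamma^{m-1}=c_m$ --- and simply observes that each building block ($f$, $\lambda$, $\gamma$, $\log$, $\exp$) lies in $\Qp[[z]]$, so the limit does too. Convergence and the functional equation are taken from \cite{Riv1}. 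You instead solve the Koenigs/B\"ottcher equation $f\circ u=u\circ M$ by coefficient recursion, which makes $\Qp$-rationality automatic but forces you to redo the convergence estimate from scratch. Your approach is more self-contained; the paper's is shorter because it outsources the analytic work.

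Two small corrections to your sketch. First, in the superattracting case $\alpha^{m-1}=c_m^{-1}$ gives $\alpha=\gamma^{-1}\in\Qp$, not $\Zp$ in general (e.g.\ if $|c_m|_p<1$); this does not hurt, since you only need $\alpha\in\Qp^\times$ for the recursion and for the inverse function theorem. Second, the convergence bound in the superattracting case is more delicate than in the attracting case when $p\mid m$: each recursion step divides by $m$, so a naive induction of the form $|u_n|_p\le B^n$ has to absorb a growing power of $|m|_p^{-1}$. This can be handled (for instance by tracking how many division-by-$m$ steps occur by stage $n$, which is $O(n)$, or by imitating the $\log/\exp$ formulation where the powers of $m^{-1}$ are offset by the rapidly increasing $z$-order $m^n$ of $\tilde{f}^n$), but it deserves more than the one-line sketch you give.
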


\begin{proof}
Applying the Weierstrass Preparation Theorem to $f(z)-z$
(or equivalently, by inspection of the Newton polygon),
$f$ has a $\Qp$-rational fixed point $y\in D(0,1)$;
that is, $y\in p\Zp$.
Clearly $\lambda=f'(y)$ is also in $p\Zp$.
Replacing $f(z)$ by $f(z+y)-y$ (and, ultimately, replacing
$u(z)$ by $u(z)+y$), we may assume hereafter that $y=0$.
By \cite[Proposition~3.2(i)]{Riv1},
$\lim_{n\rightarrow\infty}f^n(z)=0$
for all $z\in D(0,1)$.

If $\lambda\neq 0$,
then Rivera-Letelier
defines $u^{-1}(z) := \lim_{n\rightarrow\infty} \lambda^{-n} f^n(z)$
and proves in \cite[Proposition~3.3(i)]{Riv1}
that it has an inverse $u(z)$ under composition
that satisfies the desired properties
for some radius $0<r<1$.
Note that $f \in \Qp[[z]]$, and hence
$\lambda^{-n} f^n \in \Qp[[z]]$ for all $n\geq 1$.
Thus, $u^{-1} \in \Qp[[z]]$, and therefore
$u \in \Qp[[z]]$ as well.

If $\lambda=0$, then choose
$\gamma\in\Zp\smallsetminus\{0\}$ with
$\gamma^{m-1}=c_m$, according to the hypotheses.
Define $\tilde{f}(z) := \gamma f(\gamma^{-1} z)$, so that
$\tilde{f}(z) = z^m(1+g(z))$, with $g\in z\Qp[[z]]$.
Rivera-Letelier defines
$$h(z) := \sum_{n\geq 0} m^{-n-1} \log\Big(1 + g(\tilde{f}^n(z))\Big)
\in z \Qp[[z]]$$
in \cite[Proposition~3.3(ii)]{Riv1},
where $\log(1+z) = z -z^2/2 + z^3/3 - \cdots$.
He then sets 
$\tilde{u}^{-1}(z) := z\exp(h(z))$, where
$\exp(z) = 1 + z + z^2/2! + \cdots$,
and shows that the inverse $\tilde{u}$ of
$\tilde{u}^{-1}$ has all the
desired properties for $\tilde{f}$;
note also that $\tilde{u}\in\Qp[[z]]$, because
$\log(1+\cdot)$, $\exp$, $g$, $\tilde{f}\in\Qp[[z]]$.
Hence, $u(z) = \gamma^{-1} \tilde{u}(z)\in\Qp[[z]]$
has the desired properties for $f$, mapping
some disk $\Dbar(0,s)$ bijectively onto some disk
$\Dbar(y,r)\subseteq D(0,1)$.
Finally, the radius $s$
must be less than $1$, or else $u(1)\neq y$ will be
fixed by $f$, contradicting the fact that
$\lim_{n\to \infty} f^n(u(1))=y$.
\end{proof}

\begin{lemma}
\label{lem:siegel}
Let $f(z)=a_0+a_1 z + a_2 z^2 + \cdots \in\Zp[[z]]$ be a
nonconstant power series with $|a_0|_p<1$ but $|a_1|_p = 1$.
Then for any nonperiodic $x\in p\Zp$,
there are: an integer $k\geq 1$,
radii $0<r< 1$ and $s\geq |k|_p$,
and a power series $u\in\Qp[[z]]$ 
mapping $\Dbar(0,s)$ bijectively onto $\Dbar(x,r)$
with $u(0)=x$,
such that for all $z\in \Dbar(x,r)$ and $n\geq 0$,
$$f^{nk}(z) = u(nk+ u^{-1}(z)).$$
\end{lemma}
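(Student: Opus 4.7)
The plan is to parallel the proof of Lemma~\ref{lem:attr}: the existence of the integer $k\geq 1$, the radii $r, s$, and a power series $u\in\Cp[[z]]$ satisfying all the stated properties is precisely \cite[Proposition~3.16]{Riv1}, so the only additional content I need to supply is the verification that $u$ has coefficients in $\Qp$ rather than merely in $\Cp$.

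First I would recall the setup: since $|a_0|_p < 1$ and $|a_1|_p = 1$, the map $f$ carries $p\Zp$ into itself and acts there as an isometry. Consequently the nonperiodic point $x \in p\Zp$ satisfies $x \in \Qp$, and its full forward orbit $\{f^n(x)\}_{n \geq 0}$ lies in $p\Zp\cap\Qp$. Rivera-Letelier selects the integer $k \geq 1$ by a compactness argument applied to this orbit, ensuring that $f^k$ is so close to the identity on a disk $\Dbar(x,r) \subset p\Zp$ (with $0 < r < 1$) that one can semiconjugate $f^k$ to the translation $w \mapsto w+k$. Since $k$ is an integer, $x \in \Qp$, and $f \in \Qp[[z]]$, the iterate $f^k$ expanded about $x$ lies in $\Qp[[z-x]]$.

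Next I would inspect Rivera-Letelier's construction of $u^{-1}$, which is obtained as a coefficient-wise (Gauss-norm) limit of certain rescaled iterates of $f^k$, together with applications of the $p$-adic $\log$ and $\exp$ series (both of which have rational coefficients). Every element of the defining sequence lies in $\Qp[[z-x]]$, and $\Qp[[z-x]]$ is a closed subspace of $\Cp[[z-x]]$ in the Gauss topology in which the convergence takes place. Hence $u^{-1}\in\Qp[[z-x]]$, and so its compositional inverse $u$ lies in $\Qp[[z]]$ after the shift that arranges $u(0)=x$. The bijectivity of $u:\Dbar(0,s)\to\Dbar(x,r)$, the bound $s\geq |k|_p$, and the semiconjugation $f^{nk}(z) = u(nk + u^{-1}(z))$ for all $n\geq 0$ and $z\in\Dbar(x,r)$ then follow verbatim from Rivera-Letelier's statement.

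The main obstacle is tracking $\Qp$-rationality carefully through Rivera-Letelier's limit construction of $u^{-1}$. This amounts to bookkeeping directly parallel to what was done for the attracting and superattracting cases in Lemma~\ref{lem:attr}: the essential inputs are that $f \in \Qp[[z]]$, that $x \in \Qp$, that the integer $k$ and the translation $w\mapsto w+k$ are defined over $\Qp$, and that $\Qp[[z-x]]$ is complete in the Gauss topology. I expect no new substantive difficulty beyond this verification.
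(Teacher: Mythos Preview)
Your overall strategy matches the paper's: the existence of $k$, $r$, $s$, and $u$ is drawn from Rivera-Letelier, and the only new content is to verify that $u$ has coefficients in $\Qp$ by tracking rationality through his construction.  However, your description of that construction is inaccurate.  In the quasiperiodic case there is no use of the $p$-adic $\log$ or $\exp$; those appeared only in the superattracting part of Lemma~\ref{lem:attr}.  Instead, Rivera-Letelier first forms the \emph{iterative logarithm}
\[
f_*(z) \;:=\; \lim_{|n|_p \to 0}\frac{f^{nk}(z)-z}{nk},
\]
a uniform limit on a disk $\Dbar(x,r)$ (this is \cite[Corollaire~3.12]{Riv1}); since each approximant lies in $\Qp[[z-x]]$, so does $f_*$.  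Because $x$ is nonperiodic, $f_*(x)\neq 0$ by \cite[Proposition~3.16(1)]{Riv1}, so one may define $u^{-1}\in\Qp[[z-x]]$ as the antiderivative of $1/f_*$ normalized by $u^{-1}(x)=0$.  After shrinking $r$ so that $u^{-1}$ is injective and adjusting $k$ so that $f^k(x)\in\Dbar(x,r)$, the proof of \cite[Proposition~3.16(2)]{Riv1} gives the functional equation, and the compositional inverse $u$ automatically has $\Qp$-coefficients.

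So your plan would succeed once you replace the ``rescaled iterates plus $\log$/$\exp$'' picture with the correct one: limit $\to$ reciprocal $\to$ antiderivative.  Each of these operations manifestly preserves $\Qp[[z-x]]$, which is exactly the bookkeeping the paper carries out.
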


\begin{proof}
  Because $f\in\Zp[[z]]$ with $|c_1|_p=1$ and $|c_0|_p<1$, $f$ maps
  $D(0,1)$ bijectively onto itself.  Therefore, by
  \cite[Corollaire~3.12]{Riv1}, $f$ is quasiperiodic, which means in
  particular that for some $0<r<1$ and for some positive integer $k$,
  the function
$$f_*(z):=\lim_{|n|_p\to 0}\frac{f^{nk}(z) - z}{nk}$$
converges uniformly on $\Dbar(x,r)$ to a power series
in $\Cp[[z-x]]$.
In fact, $f_*\in\Qp[[z-x]]$, because
$(f^{nk}(z) - z)/(nk)\in\Qp[[z-x]]$ for every $n$.

Since $x$ is not periodic, $f_*(x)\neq 0$,
by \cite[Proposition~3.16(1)]{Riv1}.
Define $u^{-1}\in\Qp[[z-x]]$ to be the antiderivative
of $1/f_*$ with $u^{-1}(x)=0$.  Because $(u^{-1})'(x)\neq 0$,
we may decrease $r$
so that $u^{-1}$ is one-to-one on $\Dbar(x,r)$.
Also replace $k$ by a multiple of itself
so that $f^k(x)\in\Dbar(0,r)$, and
write $\Dbar(0,s):=u^{-1}(\Dbar(x,r))$.
The proof of \cite[Proposition~3.16(2)]{Riv1} shows
that the inverse $u$ of $u^{-1}$, which must also
have coefficients in $\Qp$, satisfies
the desired properties.
\end{proof}

\begin{remark}
In fact, the integer $k$ in Lemma~\ref{lem:siegel} is
at most $p$, at least in the case that $p>3$; see
Proposition~\ref{thm:bounding-k}.
\end{remark}

\section{A growth lemma}
\label{sect:speeds}

We will also need a technical lemma about the growth
of certain solutions
of multivariate $p$-adic power series.  Before stating it, we
set some notation.  First, we fix $m\geq 1$,
and with $\bN=\{0,1,2,\ldots\}$ denoting the
natural numbers, we order $\bN^m$ by lexicographic ordering reading
right-to-left.  That is, $(b_1,\ldots,b_m) \prec (b'_1,\ldots,b'_m)$
if either $b_m<b'_m$, or $b_m=b'_m$ but $b_{m-1}<b'_{m-1}$,
or $b_m=b'_m$ and $b_{m-1}=b'_{m-1}$ but $b_{m-2}<b'_{m-2}$, etc.
Note that this order $\prec$ gives a well-ordering of $\bN^m$.

Next, for any $m$-tuple $w=(a,b_2,\ldots,b_m)\in\bN^m$ and $n\geq 0$,
set
$|w|:=a+b_2+\cdots +b_m$, and define the function $f_w:\bN\to\bN$ by
\begin{equation}
\label{eq:fwdef}
f_w(n) = a n + \sum_{j=2}^m b_j j^n.
\end{equation}
For any $w,w'\in\bN^m$, note that
$w\prec w'$ if and only if $f_w(n)$ grows more slowly than $f_{w'}(n)$ as
$n\to\infty$.
Finally, given a power series
$G\in\Qp[[z_0,z_1,\ldots,z_m]]$, we may write $G$ uniquely as
\begin{equation}
\label{eq:Gsumdef}
G(z_0,z_1,\ldots,z_m)=
\sum_{w\in\bN^m} g_w(z_0) z^w,
\end{equation}
where $g_w\in\Q_p[[z_0]]$, and for $w=(a,b_2,\ldots,b_m)\in \bN^m$,
$z^w$ denotes
$$z^w = z_1^a z_2^{b_2} z_3^{b_3} \cdots z_m^{b_m}.$$

Armed with this notation, we can now state our Lemma.

\begin{lemma}
\label{lem:speeds}
Let
$G(z_0,z_1, z_2, \ldots,z_m)\in \Qp[[z_0,z_1,z_2, \ldots,z_m]]$ be 
a nontrivial
power series in $m+1\geq 1$ variables.
Write $G=\sum_w g_w(z_0) z^w$ as in
equation~\eqref{eq:Gsumdef}, and
let $v\in\bN^m $ be the minimal
index with respect to $\prec$ such that $g_v\neq 0$.
Assume that $g_v$ converges on $\Dbar(0,1)$
and that there is a radius $0<s\leq 1$ such that
for all $\alpha\in\Zp$, $g_v$ does not vanish
at more than one point of the disk $\Dbar(\alpha,s)$.
Assume also that there exists $B > 0$ such that for each $w\succ v$,
all coefficients of $g_w$ have absolute value at most $p^{B|w|}$.

Then there exists $C>1$
with the following property:
If $\alpha\in\Dbar(0,1)$, and if
$\{n_i\}_{i\geq 1}$
is a strictly increasing sequence 
of positive integers such that for each $i\geq 1$,
\begin{enumerate}
\item[$(a)$] $|n_i - \alpha|_p \leq s$, and

\item[$(b)$] 
$G(n_i,p^{n_i},p^{2^{n_i}},p^{3^{n_i}},...,p^{m^{n_i}}) =   0$,
%
\end{enumerate}
then 
$n_{i+1} - n_i > C^{n_i}$ for all sufficiently large $i$.
\end{lemma}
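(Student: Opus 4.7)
My plan is to estimate the $p$-adic size of each term in the expansion
$$G(n_i, p^{n_i}, p^{2^{n_i}}, \ldots, p^{m^{n_i}}) \;=\; \sum_{w \in \bN^m} g_w(n_i) \, p^{f_w(n_i)} \;=\; 0,$$
and exploit the fact that, for $n_i$ large, the summand at $w = v$ ought to dominate every other one so strongly that its vanishing forces $g_v(n_i)$ to be tiny. Because $|n_i|_p \leq 1$, the coefficient hypothesis yields $|g_w(n_i)|_p \leq p^{B|w|}$ for every $w \succ v$. The immediate $\prec$-successor of $v=(a,b_2,\ldots,b_m)$ is $w^{\ast} := (a+1, b_2, \ldots, b_m)$, with $f_{w^{\ast}}(n) - f_v(n) = n$; for every other $w \succ v$, the difference $f_w(n)-f_v(n)$ either grows like $kn$ for some $k\geq 2$ (when $w$ moves further along the first coordinate only) or contains a genuine exponential $c\cdot j^n$ with $j\geq 2$. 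Sorting the tail $w\succ v$ into these two families and summing, I would obtain, for $n_i$ sufficiently large,
$$\Big| \sum_{w \succ v} g_w(n_i) \, p^{f_w(n_i)} \Big|_p \;\leq\; p^{-f_v(n_i) - n_i + B|v| + O(1)}.$$
Matching this against $|g_v(n_i) p^{f_v(n_i)}|_p$ in the vanishing relation then yields $|g_v(n_i)|_p \leq p^{-n_i + O(1)}$.

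Next I would translate this smallness of $|g_v(n_i)|_p$ into smallness of $|n_i - \beta|_p$ for a fixed $\beta$. Choose $\alpha \in \Zp$ as in hypothesis $(a)$, so that every $n_i$ lies in $\Dbar(\alpha,s)$. Since $g_v$ converges on $\Dbar(0,1)\supseteq\Dbar(\alpha,s)$ and has at most one zero in this disk, a standard non-archimedean Weierstrass factorization gives either (i) $g_v$ has no zero in $\Dbar(\alpha,s)$, so that $|g_v(z)|_p$ is a positive constant $\rho$ throughout, or (ii) $g_v(z) = (z-\beta)h(z)$ on $\Dbar(\alpha,s)$ with $\beta\in\Dbar(\alpha,s)$ and $|h|_p$ equal to a positive constant $\rho$. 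Case (i) can occur for only finitely many $i$, because $|g_v(n_i)|_p = \rho$ contradicts the decay bound $|g_v(n_i)|_p\leq p^{-n_i+O(1)}$ once $n_i$ is large. In case (ii) the bound reads $|n_i - \beta|_p \leq p^{-n_i + O(1)}$ for all sufficiently large $i$.

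Finally, from $n_{i+1} - n_i = (n_{i+1}-\beta)-(n_i-\beta)$ and the ultrametric inequality,
$$|n_{i+1} - n_i|_p \;\leq\; \max\bigl(|n_{i+1}-\beta|_p,\, |n_i - \beta|_p\bigr) \;\leq\; p^{-n_i + O(1)}.$$
Because $n_{i+1}-n_i$ is a positive integer, this forces $p^{n_i - O(1)}$ to divide $n_{i+1}-n_i$, and hence $n_{i+1}-n_i \geq p^{n_i - O(1)} > C^{n_i}$ for any fixed $1<C<p$ and all $i$ sufficiently large.

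The main obstacle is the tail estimate in the first paragraph: one must verify, uniformly in $w$, that the pointwise bounds $p^{B|w|-f_w(n_i)}$ sum to a quantity of the size $p^{B|v|-f_v(n_i)-n_i+O(1)}$ and do not accumulate. The bookkeeping splits naturally into the ``linear chain'' $\{(a+k, b_2, \ldots, b_m):k\geq 1\}$, where $f_w - f_v = kn$ and a geometric series in $p^{B-n}$ has to be summed, and the remaining $w$ with some higher coordinate strictly larger than $v$'s, where $f_w - f_v$ contains a term of size $\geq 2^n$ and the contribution is negligible. Both pieces are elementary once arranged, but the second requires checking that $\sum_w p^{B|w|}<\infty$ in an appropriate convergence sense so that the tail beyond any finite truncation is dominated by $p^{-j^{n_i}}$-type factors.
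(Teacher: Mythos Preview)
Your overall strategy matches the paper's: bound $|g_v(n_i)|_p$ via the vanishing relation and the tail, then convert smallness of $g_v(n_i)$ into smallness of $|n_i-\beta|_p$ for the unique zero $\beta$ in $\Dbar(\alpha,s)$. Two points need correction.

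First, the factorization in your case~(ii) is not right. The hypothesis says $g_v$ vanishes at no more than one \emph{point} of $\Dbar(\alpha,s)$; it says nothing about the multiplicity of that zero. In general Weierstrass preparation gives $g_v(z)=(z-\beta)^{\delta}h(z)$ with $\delta\geq 1$ and $|h|_p$ a nonzero constant on the disk (equivalently, as the paper phrases it, $g_v(z)=\sum_{i\geq\delta}c_i(z-\beta)^i$ with $|c_\delta|_p s^\delta>|c_i|_p s^i$ for all $i>\delta$, from the Newton polygon). Your $h$ would be $(z-\beta)^{\delta-1}\cdot(\text{unit})$ when $\delta\geq 2$, which does \emph{not} have constant absolute value. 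The correct conclusion is $|n_i-\beta|_p\leq p^{-n_i/\delta+O(1)}$, and the final constant is any $C$ with $1<C<p^{1/\delta}$, not $C<p$. The lemma's statement only asks for some $C>1$, so the argument is easily repaired, but as written your bound $|n_i-\beta|_p\leq p^{-n_i+O(1)}$ is false.

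Second, the concern in your last paragraph about the tail ``accumulating'' is an archimedean reflex. In $\Qp$ one has $\bigl|\sum_w a_w\bigr|_p\leq\max_w|a_w|_p$, so no summation is needed: it suffices to bound each term individually. The paper does this via a single uniform inequality (its Claim): for $n$ large enough depending only on $v$ and $B$, one has $f_w(n)-f_v(n)\geq n+B(|w|-|v|-1)$ for \emph{every} $w\succ v$. Combined with $|g_w(n_i)|_p\leq p^{B|w|}$, every tail term then has absolute value at most $p^{-n_i+B|v|+B}$, and the ultrametric inequality gives the estimate in one stroke. Your two-family split (linear chain versus exponential growth) would also work, but the geometric-series bookkeeping is unnecessary.
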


\begin{proof}
If $g_w=0$ for all $w\neq v$, then $G=g_v(z_0) z^{v}$.
By hypothesis (b), then,
the one-variable nonzero power series
$g_{v}(z_0)$ vanishes at all points of the sequence
$\{n_i\}_{i\geq 1}$, a contradiction;
hence, no such sequence exists.
(In particular, if $m=0$, then
$G$ is a nontrivial power series in the one variable $z_0$,
and therefore $G$ vanishes at only finitely many points $n_i$.)
Thus, we may assume that $g_w$ is nonzero for some $w\succ v$.

\begin{claim}
\label{key technical claim}
For any $B> 0$,
there is an integer $M = M(v, B)\geq 0$
such that for each $w \succ v$ and $n\geq M$,
$$f_w(n) - f_{v}(n) \geq  n + B (|w| - |v| - 1).$$
\end{claim}

\begin{proof}[Proof of Claim~\ref{key technical claim}.]
Write $v = (a, b_2,...,b_m)$, and choose
$M\geq B$ large enough
so that $j^M \geq (a+1)M + \sum_{k=2}^{j-1} b_k k^M$ for all $j=2,\ldots, m$.
Write $w = (a', b'_2,..,b'_m)$. Then
$$f_w(n) - f_{v}(n) = 
(a' - a)n + (b'_2 - b_2) 2^n + ... + (b'_m - b_m) m^n.$$
We consider two cases:
 
{\bf Case 1.}  If $b'_k = b_k$ for each $k=2,..,m$, then
$a'>a$, and therefore
$$ f_w(n) - f_{v}(n) -n  = (a' - a -1)n
\geq (a'-a -1)B = B(|w| - |v| - 1)$$
for $n \ge M$, because $M\geq B$.
 
{\bf Case 2.}  Otherwise, there exists $k = 2,..,m$ such that $b'_k > b_k$.
Let $j$ be the largest such $k$, so that $b'_k = b_k$ for $k> j$. Then
\begin{multline*}
 f_w(n) - f_{v}(n) - B |w| + B |v| \\
 =  (a' - a) (n - B) +
 \sum_{k=2}^{j-1} (b'_k - b_k) (k^n - B) + (b'_j - b_j) (j^n - B)\\
 \ge  - a n - \sum_{k=2}^{j-1} b_k k^n + j^n  - B  \ge n-B,
\end{multline*}
where the first inequality is because $n\geq B$
and  $b'_j - b_j\geq 1$,
and the second is because $n\geq M$.
This finishes the proof of Claim~\ref{key technical claim}.
\end{proof}

By hypothesis~(b), for any $i$ such that $n_i\geq M(v,B)$,
\begin{equation}
\label{eq:gvbound}
|g_v(n_i)|_p 
=
\bigg| 
\sum_{w\succ v} g_w(n_i) p^{f_w(n_i)-f_v(n_i)}
\bigg|_p 
\leq  p^{-n_i+ B|v| + B},
\end{equation}
where the inequality is
by Claim~\ref{key technical claim}, 
the fact that $|n_i|_p\le 1$,
and the fact that the absolute values of all coefficients
of $g_w$ are at most $p^{B|w|}$.
Let $\beta\in\Dbar(\alpha,s)\cap\Zp$ be a limit point
of the sequence $\{n_i\}_{i\geq 1}$.  Then by
inequality~\eqref{eq:gvbound}, we have $g_v(\beta)=0$.
Thus, $g_v$ can be written as
$$
g_v(z) = \sum_{i\geq \delta} c_i (z-\beta)^i,
$$
where $\delta\geq 1$ and $c_{\delta}\neq 0$.
In fact, we must have
$|c_{\delta}|_p s^{\delta} > |c_i|_p s^i$ for all $i>\delta$;
otherwise, inspection of the Newton polygon
shows that $g_v$ would have a zero besides $\beta$
in $\Dbar(\alpha,s)$.
Thus,
for $i$ sufficiently large (i.e., such that $n_i\geq M(v,B)$), we have
$$
|c_\delta(n_i-\beta)^\delta|_p = |g_v(n_i)|_p
\leq p^{-n_i+O(1)},
$$
by hypothesis~(a) and inequality~\eqref{eq:gvbound}, and hence 
\begin{equation}
  \label{eq:ni-minus-beta-is-small}
|n_i-\beta|_p \leq |c_{\delta}|_p^{-1/\delta} p^{-n_i/\delta+O(1)}.
\end{equation}
It follows that
\begin{equation}
  \label{eq:n-sub-i-congruence}
n_{i+1}  \equiv  n_i \mod p^{\lfloor n_i/\delta-O(1) \rfloor}.
\end{equation}
Hence, if we choose $C$ such that $1<C < p^{1/\delta}$, we have
$n_{i+1} - n_i > C^{n_i}$
for $i$ sufficiently large, as desired.
\end{proof}

\begin{remark}
Lemma~\ref{lem:speeds} holds also if $G$ is defined over a finite
extension $K$ of $\Qp$; the only significant change is that the
constant $C$ will depend also on the ramification index $e$ of
$K/\Qp$.
\end{remark}

\section{Proof of Theorem~\ref{thm:ml}}
\label{proof of main result}

%
\begin{proof}
If any $x_j$ is preperiodic under $f_j$
(without loss, $x'_g:=f_g^m(x_g)$ is periodic under $f_g$),
consider the action of
$\Phi' :=(f_1,\ldots,f_{g-1})$ on $(\bP^1)^{g-1}$, with
$V' :=V\cap\{z_g=x'_g\}$ viewed as a subvariety of $(\bP^1)^{g-1}$.
By this reduction,
we may assume without loss that no $x_j$ is preperiodic.

{\bf Step (i).}  Our first goal is to find an appropriate prime $p$
so that we may work over $\bZ_p$.

Choose homogeneous coordinates for each $\bP^1$, so that
we may write each $f_j$ as $f_j([a:b]) = [\phi_j(a,b): \psi_j(a,b)]$ for
homogeneous relatively prime polynomials $\phi_j, \psi_j \in \bC[a,b]$;
write $P$ in these coordinates as well.  
Let $\cV$ be a finite set of polynomials (in $g$ pairs of
homogeneous variables)
generating the vanishing ideal of the variety $V$.
Let $R$ be the subring of
$\bC$ generated by the coordinates of $P$, the coefficients of the
$\phi_j$ and $\psi_j$, the coefficients of each polynomial
$H\in\cV$, and the resultants
  $$\Res(\phi_1,\psi_1), \ldots, \Res(\phi_g,\psi_g)$$
along with their reciprocals
  $$1/(\Res(\phi_1,\psi_1)), \ldots, 1/(\Res(\phi_g,\psi_g)).$$

Each $f_j$ has only finitely many critical points, and therefore
only finitely many superattracting periodic points, because any
superattracting cycle must include a critical point.  Adjoin
the coordinates of all the superattracting periodic points to $R$.
For each
superattracting periodic point $Q$ of $f_j$ of period $\kappa$,
choose a local coordinate $x_Q$ at $Q$ defined over the fraction
field $K_R$ of $R$, and write $f_j^{\kappa}$
as $f_j^{\kappa}(x_Q) = c_m x_Q^m + O(x_Q^{m+1})$, where $m\geq 2$
and $c_m\in K_R^{\times}$.  Now adjoin an $(m-1)$-st root of $c_m$
to $R$ at each such point.
    
By \cite[Lemma 3.1]{Bell} (see also \cite{Lech}), we may find an
embedding of $R$ into $\bZ_p$ for some prime $p$.  Thus, we may
consider $P$ to be a point in $(\bP^1)^g(\bZ_p)$ and $\Phi$
to be defined over $\Zp$.
Because the
resultants $\Res(\phi_j,\psi_j)$ are all mapped to units in $\bZ_p$,
each map $f_j$ has good reduction, i.e.,
reducing $f_j$ modulo $p$ gives an
endomorphism of $\bP^1$ defined over $\bF_p$.
The fact that each $c_m^{1/(m-1)}$ is in $\Qp$ will be needed
to invoke Lemma~\ref{lem:attr}(ii) in Step (ii).

{\bf Step (ii).}
Next, we will apply Lemmas~\ref{lem:attr} and~\ref{lem:siegel}
to produce power series $u_j$ to be used later in the proof,
and to find integers $k_j$ that will be used to construct
the integer $N$.

Write $P:=(x_1,\dots,x_g)\in(\bP^1)^g(\mathbb{Z}_p)$.
There are only $p+1$ residue classes in $\bP^1(\bZ_p)$;
hence, for each $j=1,\ldots, g$, there are integers $k_j\geq 1$
and $\ell_j\geq 0$ with $k_j+\ell_j \leq p+1$ such that
$f_j^{k_j}$ maps the residue class $[f_j^{\ell_j}(x_j)]$ into itself.
By a $PGL(2,\Zp)$-change of coordinates at each $j$,
we may assume that $f_j^{\ell_j}(x_j)\in p\Zp$, and therefore
$f_j^{k_j}$ may be written as a nonconstant power
series in $\Zp[[z]]$ mapping $D(0,1)$ to itself.

If $|(f_j^{k_j})'(f^{\ell_j}(x_j))|_p<1$
(i.e., attracting or superattracting),
we may apply Lemma~\ref{lem:attr}.
(In the superattracting case we are using
the fact that the corresponding coefficient $c_m$
has an $(m-1)$-st root in $\Qp$.  Although the
coordinate $\tilde{x}_Q$ we may now be using
at the superattracting point may differ from
the coordinate $x_Q$ of Step~(i), both are
defined over $\Qp$.  Thus, there is some $\alpha\in\Qp^{\times}$
such that $x_Q = \alpha \tilde{x}_Q + O(\tilde{x}_Q^2)$, and
the expansion $c_m x_Q^m + O(x_Q^{m+1})$
becomes $\alpha^{m-1} c_m \tilde{x}_Q^m + O(\tilde{x}_Q^{m+1})$.
Hence, the integer $m$ is preserved, and the
lead coefficient still has an $(m-1)$-st root in $\Qp$.
Of course, this root is in fact in $\Zp$, because our
choice of coordinates forced  $f_j^{k_j}\in\Zp[[z]]$.)
The Lemma yields that there is a
point $y_j\in D(0,1)$ fixed by $f_j^{k_j}$
with multiplier $\lambda_j$,
along with radii $r_j$ and $s_j$ (where $s_j := r_j$
in the non-superattracting case) and associated
power series $u_j\in\Qp[[z]]$.
Increase $\ell_j$
if necessary so that $f_j^{\ell_j}(x_j)\in\Dbar(y_j,r_j)$.
Define $\mu_j:=u_j^{-1}(f^{\ell_j}(x_j))$; note that
$\mu_j\in p\Zp$, because $s_j<1$.  In addition,
$\mu_j\neq 0$, because $u_j$ is bijective
and $u_j(0)=y_j$ is fixed by $f_j$,
while $u_j(\mu_j)=f_j^{\ell_j}(x_j)$ is not.

If $|(f_j^{k_j})'(f^{\ell_j}(x_j))|_p=1$ (i.e., quasiperiodic),
apply Lemma~\ref{lem:siegel} to $f_j^{k_j}$ and the point
$f^{\ell_j}(x_j)$ to
obtain radii $r_j$ and $s_j$ and a power series
$u_j$.  Set $\mu_j:=u_j^{-1}(f^{\ell_j}(x_j))$.
In addition,
multiply $k_j$ by the $k$ from Lemma~\ref{lem:siegel}
so that $f_j^{k_j + \ell_j}(x_j)\in\Dbar(f^{\ell_j}(x_j),r_j)$.

{\bf Step (iii).}
In this step, we consider only the
attracting or superattracting cases
($|(f_j^{k_j})'(f^{\ell_j}(x_j))|_p<1$).
We will show that certain functions of $n$ can be
expressed as power series in $n$, $p^n$, and $p^{m^n}$.

If $\lambda_j\neq 0$ (i.e. $0<|\lambda_j|_p<1$),
write $\lambda_j = \beta_j p^{e_j}$, where $e_j\geq 1$
and $\beta_j\in\Zp^{\times}$.
If $\beta_j$ is a root of unity, we can
choose $M\geq 1$ such that $\beta_j^M=1$.
If $\beta_j$ is not a root of unity,
it is well known that
there is an integer $M\geq 1$ such
that $\beta_j^{n M}$
can be written as a power series in $n$
with coefficients in $\Z_p$.
(For example,
apply Lemma~\ref{lem:siegel} to the function
$z\mapsto\beta_j z$ and the point $p$.
In fact, by Theorem~\ref{thm:analytic-congruent-to-z},
we can choose $M$ to be the smallest positive integer
such that $|\beta_j^{M} - 1|_p < 1$, so that
$M|(p-1)$.)
Either way, then, replacing
$k_j$ by $Mk_j$ (and hence
$\lambda_j$ by $\lambda_j^M$,  $e_j$ by $Me_j$,
and $\beta_j$ by $\beta_j^M$), we can write
\begin{equation}
\label{eq:gdef1}
\lambda_j^n = (p^n)^{e_j} g_j(n)
\qquad \text{for all integers } n\geq 0,
\end{equation}
for some power series $g_j(z)\in\Zp[[z]]$.

If $\lambda_j=0$ (i.e. $y_j$ is a superattracting
fixed point of $f^{k_j}$), let $m_j\geq 2$ be the corresponding
integer from Lemma~\ref{lem:attr}(ii), and write $m_j=ap^b$,
for integers $a\geq 1$ and $b\geq 0$, with $p\nmid a$.
Then as in the previous paragraph, $m_j^n$ can be written
as a power series in $n$ and $p^n$ with coefficients in $\Zp$,
after replacing $k_j$ by a multiple $Mk_j$ (and hence $m_j$ by $m_j^M$).


In addition, recall that $\mu_j=u_j^{-1}(f^{\ell_j}(x_j))$
satisfies $0<|\mu_j|_p<1$; thus,
we can write $\mu_j=\beta_j p^{e_j}$,
where $e_j\geq 1$ and $\beta_j\in\Zp^{\times}$.
If $\beta_j$ is a root of unity with, say, $\beta_j^{M'}=1$
for some $M'\geq 1$, choose an integer $M\geq 1$
so that $M' | (m_j^{2M} - m_j^M)$.
Then replacing $k_j$ by $Mk_j$ (and hence $m_j$ by $m_j^M$),
$\beta_j^{m_j^{n}}=\beta_j^{m_j}$ is constant in $n$.

On the other hand, if $\beta_j$ is not a root of
unity, then as in the attracting case
there is an integer $1\leq M'\leq p-1$ such that $\beta_j^{n M'}$
can be written as a power series in $n$ over $\Z_p$.
Again, choose $M\geq 1$ such that $M' | (m_j^{2M} - m_j^M)$,
and replace $k_j$ by $Mk_j$ (and $m_j$ by $m_j^M$).
Then $\beta_j^{m_j^{n}}$ can be written as a power series
in $(m_j^{n}-m_j)/M'$ with coefficients in $\Zp$;
since $p\nmid M'$, and expressing
$m_j^n$ as a power series,
$\beta_j^{m_j^{n}}$ can in fact
be written as a power series in $n$ and $p^n$,
with coefficients in $\Zp$.

Thus, in the superattracting case, after having 
increased $k_j$ if necessary, we can write
\begin{equation}
\label{eq:gdef2}
\mu_j^{m_j^n} = (p^{m_j^n})^{e_j} g_j(n,p^n)
\qquad \text{for all integers } n\geq 0,
\end{equation}
for some power series $g_j(z_0,z_1)\in\Zp[[z_0,z_1]]$.

{\bf Step (iv).}
Let $k:=\lcm(k_1,\ldots,k_g)\geq 1$; this will essentially
be our value of $N$ in the statement of Theorem~\ref{thm:ml} except
for one more change in Step~(vi).  Note that replacing
each $k_j$ by $k$ does not change the conclusions of
Steps~(ii) and~(iii).
Specifically, the radii $r_j$, $s_j$, the power series $u_j$,
the integer $\ell_j$, and the point $\mu_j$ were chosen before
changing $k_j$, and so they are unaffected.
Moreover, in the quasiperiodic case,
$f_j^{k_j + \ell_j}(x_j)$ still lies in $\Dbar(f^{\ell_j}(x_j),r_j)$.
In the attracting case,
$\lambda_j$ is replaced by a power of itself,
and $\lambda_j^n$
can still be written as in \eqref{eq:gdef1}.
In the superattracting case,
$m_j$ is replaced by a power of itself, and
$\mu_j^{m_j^n}$ can still be written as in \eqref{eq:gdef2}.
In addition, given our changes to $\lambda_j$
and $m_j$ in the attracting and superattracting cases,
the functional equations of Lemmas~\ref{lem:attr}
and~\ref{lem:siegel} are preserved under this change of $k$.

Let $L=\max\{\ell_1,\ldots,\ell_g\}$.
For each $\ell=L,\ldots,L+k-1$ and each $j=1,\ldots,g$,
choose a linear fractional transformation
$\eta_{j,\ell}\in PGL(2,\Zp)$ so that
$\eta_{j,\ell}\circ f_j^{\ell}(x_j)\in D(0,1)$.
Hence, $\eta_{j,\ell}\circ f_j^{\ell-\ell_j}(D(0,1))\subseteq D(0,1)$,
because $f_j$ has good reduction.
Finally, define $E_{j,\ell}=\eta_{j,\ell}\circ f_j^{\ell-\ell_j}\circ u_j$,
so that $E_{j,\ell}\in\Qp[[z]]$ maps $\Dbar(0,s_j)$
into $D(0,1)$.

{\bf Step (v).}
For each $\ell=L,\ldots,L+k-1$
and each $f_j$ of attracting but not superattracting type,
define the power series
$$F_{j,\ell}(z_0,z_1) =
E_{j,\ell}\big(z_1^{e_j} g_j(z_0) \mu_j\big)
\in \bQ_p[[z_0,z_1]],$$
where $E_{j,\ell}$ is as in Step~(iv),
$g_j$ is as in equation~\eqref{eq:gdef1},
and $\mu_j = u_j^{-1}(f^{\ell_j}(x_j))$ as before,
so that
$F_{j,\ell}(n,p^n)=\eta_{j,\ell}\circ f_j^{\ell + nk}(x_j)$
for all $n\geq 0$.

Because $E_{j,\ell}$ maps $\Dbar(0,s_j)$ into $D(0,1)$,
there is some $B> 0$ such that for every $i\geq 0$,
the coefficient of $z^i$
in $E_{j,\ell}(z)$ has absolute value at most $p^{Bi}$.
Recalling also that $g_j\in\Z_p[[z]]$ and $|\mu_j|_p<1$,
it follows that if we write
$F_{j,\ell}(z_0,z_1)=\sum_{i=0}^{\infty} h_i(z_0)z_1^i$
(where $h_i\in\Qp[[z]]$),
then for each $i\geq 0$, all coefficients of $h_i$ have absolute value
at most $p^{Bi}$.

For each $\ell=L,\ldots,L+k-1$
and each $f_j$ of superattracting type, define the power series
$$F_{j,\ell}(z_0,z_1,z_{m_j}) =
E_{j,\ell}\big( g_j(z_0,z_1) z_{m_j}^{e_j} \big)
\in \bQ_p[[z_0,z_1,z_{m_j}]],$$
where $E_{j,\ell}$ is as in Step~(iv) and
$g_j$ is as in equation~\eqref{eq:gdef2},
so that
$F_{j,\ell}(n,p^n,p^{m_j^{n}})=\eta_{j,\ell}\circ f_j^{\ell + nk}(x_j)$
for all $n\geq 0$.

Again, because $E_{j,\ell}$ maps $\Dbar(0,s_j)$ into $D(0,1)$,
there is some $B> 0$ such that the coefficient of $z^i$
in $E_{j,\ell}(z)$ has absolute value at most $p^{Bi}$.
Hence, if we write
$F_{j,\ell}(z_0,z_1,z_{m_j})
=\sum_{i_1,i_2\geq 0} h_{i_1,i_2}(z_0)z_1^{i_1}z_{m_j}^{i_2}$
(where $h_{i_1,i_2}\in\Qp[[z]]$),
then as before, since $g_j\in\Zp[[z_0,z_1]]$,
all coefficients of $h_{i_1,i_2}$ have absolute value
at most $p^{Bi_2}$, and hence at most $p^{B(i_1 + i_2)}$.

Finally, if $f_j$ is of quasiperiodic type, then
define, for each $\ell=L,\ldots,L+k-1$, the power series
$$F_{j,\ell}(z_0) = E_{j,\ell}(k z_0 + \mu_j )
\in \bQ_p[[z_0]],$$
so that $F_{j,\ell}(n)=\eta_{j,\ell}\circ f_j^{\ell + nk}(x_j)$
for all $n\geq 0$.
All coefficients of $F_{j,\ell}$ are at most $1$,
because $|k|_p,|\mu_j|_p\leq s_j$
and $E_{j,\ell}$ maps $\Dbar(0,s_j)$ into $D(0,1)$.

{\bf Step (vi).}
Let $m$ be the maximum of the $m_j$ across all the
superattracting $f_j$'s, or $m=1$ if there are none.
For each $\ell=L,\ldots,L+k-1$,
let $\cV_{\ell}\subseteq\Zp[t_1,\ldots,t_g]$ be
the finite set of polynomials $\cV$
generating the vanishing ideal of $V$ from step~(i),
but now dehomogenized with respect to the coordinates
determined by
$(\eta_{1,\ell},\ldots,\eta_{g,\ell})$.
For each polynomial $H\in\cV_{\ell}$, define
$$G_{H,\ell}(z_0,\ldots,z_m) = H(F_{1,\ell},F_{2,\ell},\ldots,F_{g,\ell})
\in \bQ_p[[z_0,z_1,\ldots,z_m]].$$
Then by construction,
$G_{H,\ell}(n,p^n,p^{2^n},\ldots,p^{m^n})$
is defined for all integers $n\geq 0$, and is zero
precisely at those $n$ for which $\Phi^{\ell+nk}(P)\in V$.

For each nontrivial $G_{H,\ell}$, write
$$G_{H,\ell}(z_0,p^n,p^{2^n},\ldots,p^{m^n})
=\sum_{w\in\bN^m} g_w(z_0)p^{f_w(n)}$$
and select $v\in\bN^m$ as in the statement of Lemma~\ref{lem:speeds}.
By choosing the maximum of all the (finitely many) bounds $B$
from Step~(v), and because all coefficients of $H$ lie
in $\Zp$, all coefficients of $g_w$ are at most $p^{B|w|}$,
for every $w\in\bN^m$.
Since $G_{H,\ell}(n,p^n,p^{2^n},\ldots,p^{m^n})$
is defined at every $n\geq 0$, $g_v$ must converge
on $\Dbar(0,1)$.  By the Weierstrass Preparation Theorem (or, equivalently,
from the Newton polygon), $g_v$ has only finitely many zeros in $\Dbar(0,1)$.
Let $0<s_{H,\ell}\leq 1$ be the minimum distance
between any two distinct such zeros, and
let $s$ be the minimum of all the
radii $s_{H,\ell}$ across all such pairs $(H,\ell)$.
The set $\Zp$ may be covered by the disks
$D(0,s)$, $D(1,s)$, $\ldots$, $D(p^M-1,s)$,
for some integer $M\geq 0$.
Let $N:=p^M k$.

Apply Lemma~\ref{lem:speeds}
(with the values of $B$ and $s$ from the previous paragraph)
to every nontrivial $G_{H,\ell}$, and
let $C_0>1$ be the minimum of the resulting constants.
Choose any $\epsilon>0$, and let $C:=C_0^{p^M-\epsilon}>1$.

{\bf Step (vii).}
Unless conclusion~(ii) of Theorem~\ref{thm:ml} holds
for these values of $C$ and $N$, there is some $\ell\in\{L,\ldots,L+N-1\}$,
and there are infinitely many pairs $(n,n')$ of positive integers
such that
\begin{enumerate}
\item $\Phi^{\ell + n N}(P), \Phi^{\ell + n' N}(P)\in V$, and
\item $0< n' -  n  \leq C^{n}$.
\end{enumerate}
For any fixed $n\geq 1$, there are only finitely
many choices of $n'$ for which condition~(ii) above holds;
thus, there are pairs $(n,n')$ with $n$
arbitrarily large satisfying these two conditions.

Write $\ell = \ell_1 + \alpha k$ for integers
$L\leq \ell_1 < L+k$
and $0\leq \alpha < p^M$.
For each pair $(n,n')$
above, write $n_1 = np^M + \alpha$
and $n'_1 = n'p^M + \alpha$.  Then there are infinitely
many pairs $(n_1,n'_1)$ such that
\begin{enumerate}
\item[(1)] $\Phi^{\ell_1+n_1k}(P), \Phi^{\ell_1+n_1'k}(P)\in V$,
\item[(2)] $n_1\equiv n_1'\equiv \alpha\pmod{p^M}$, and
\item[(3)] $0<(n_1'-n_1)/p^M \leq
  C^{(n_1 - \alpha)/p^M}$.
\end{enumerate}
Recalling that
$C=C_0^{p^M-\epsilon}>1$ and $\alpha\geq 0$,
condition~(3) becomes
\begin{enumerate}
\item[(3')] $0<n_1'-n_1 \leq p^M C_0^{(n_1-\alpha)(1-\epsilon p^{-M}))}
\leq C_0^{n_1}$,
\end{enumerate}
for $n_1$ sufficiently large
(more precisely, for $n_1\ge \frac{Mp^M\log p}{\epsilon\log C_0}$). However,
conditions~(1), (2) and (3') coupled with Lemma~\ref{lem:speeds} yield
that $G_{H,\ell_1}$ must be trivial for all $H\in\cV_{\ell_1}$.
Hence, $\Phi^{\ell_1+nk}(P)\in V$ for all $n\geq 0$.
Let $V_0$ be the Zariski closure of
$\{\Phi^{\ell_1 + nk}(P)\}_{n\geq 0}$.  Then
$\text{dim}(V_0)\geq 1$, because $V_0$ contains infinitely
many points, and $\Phi^k(V_0)\subseteq V_0\subseteq V$.
Thus, $V_0$ is a positive-dimensional periodic subvariety of $V$.
\end{proof}

In the final step of the proof, we produced the constants
$N$ and $C$ that appeared in the statement of Theorem~\ref{thm:ml}.
In fact, as the following result shows,
for any integer $e\geq 1$, we
can increase $C$ to $C^{e-\epsilon}$, at the expense of increasing
$N$ to $eN$.

\begin{theorem}
\label{boosting}
If the proof of Theorem~\ref{thm:ml}
yields constants $C>1$ and $N\ge 1$ satisfying its conclusion, then for
any integer $e>1$ and for any $\epsilon>0$, the conclusion of
Theorem~\ref{thm:ml} holds when 
replacing the pair $(C,N)$ by $(C^{e-\epsilon},e N)$.
\end{theorem}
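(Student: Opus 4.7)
The plan is to treat Theorem~\ref{thm:ml} as a black box and derive the boosted statement combinatorially, by embedding each arithmetic progression with common difference $eN$ into a coarser progression with common difference $N$ and reading off the resulting gap.

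Let $T, N, C$ be the constants produced by Theorem~\ref{thm:ml}. If conclusion~(i) holds, the same positive-dimensional periodic subvariety already handles the boosted statement, so I assume conclusion~(ii). Given an integer $e > 1$ and $\epsilon > 0$, I will set $K := \lceil \log_C e \rceil$ and $T' := T + (K+1)N - 1$. The arithmetic is engineered so that every $\ell' \in \{T'+1,\ldots, T'+eN\}$ admits a unique decomposition $\ell' = \ell + jN$ with $\ell \in \{T+1,\ldots,T+N\}$ and $j \in \bZ_{\geq 0}$; the bounds $\ell' \geq T+(K+1)N$ and $\ell \leq T+N$ force $j \geq K$, and hence $C^j \geq e$.

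Next, fix any $n > m \geq 0$ with both $\Phi^{\ell' + m(eN)}(P)$ and $\Phi^{\ell' + n(eN)}(P)$ in $V$. Rewriting the exponents as $\ell + (j+me)N$ and $\ell + (j+ne)N$, I apply conclusion~(ii) of Theorem~\ref{thm:ml} to the pair of indices $(j+me,\, j+ne)$ (with offset $\ell$ in the original coarser progression) to obtain
\[ e(n-m) = (j+ne) - (j+me) > C^{j+me}. \]
Using $C^j \geq e$, this rearranges to $n - m > C^{me} \geq C^{(e-\epsilon)m}$, which is precisely conclusion~(ii) for the pair $(C^{e-\epsilon}, eN)$ with the new offset $T'$.

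There is no real obstacle: the argument is purely combinatorial bookkeeping and does not revisit the $p$-adic machinery of Section~\ref{p-adic analysis}, Section~\ref{sect:speeds}, or the construction of the power series $G_{H,\ell}$. The only step requiring any care is the choice of $T'$, which must force the auxiliary integer $j$ to be at least $\log_C e$ uniformly over $\ell' \in \{T'+1,\ldots,T'+eN\}$. Incidentally, the same manipulation yields the strictly stronger bound $n - m > C^{em}$; the $\epsilon$ in the statement is a formality, retained for compatibility with the $\epsilon$-loss already built into the definition of $C = C_0^{p^M - \epsilon}$ in Step~(vi) of the proof of Theorem~\ref{thm:ml}.
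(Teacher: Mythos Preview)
Your argument is correct, and it takes a genuinely different route from the paper. The paper does \emph{not} treat Theorem~\ref{thm:ml} as a black box: it reopens the proof at Step~(vi), where the original constants were assembled as $N=p^{M}k$ and $C=C_{0}^{p^{M}-\epsilon}$ from internal data $k,L,M,C_{0}$, and simply replaces every occurrence of $p^{M}$ by $ep^{M}$ (so $N:=ep^{M}k$, $C:=C_{0}^{ep^{M}-\epsilon}$), then verifies that Step~(vii) and the appeal to Lemma~\ref{lem:speeds} go through unchanged. Your approach instead embeds each residue class modulo $eN$ into one modulo $N$ via $\ell'=\ell+jN$, shifts $T$ upward to force $j\geq\lceil\log_{C}e\rceil$, and reads the boosted gap directly from the old conclusion~(ii). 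This is more elementary and more portable: it applies to \emph{any} triple $(T,N,C)$ satisfying conclusion~(ii), not only the specific ones arising from the $p$-adic construction, and as you observe it even delivers $n-m>C^{em}$ with no $\epsilon$-loss. The paper's white-box approach, by contrast, can in principle squeeze out a marginally larger constant (namely $C_{0}^{ep^{M}-\epsilon}$, which exceeds $C^{e}=C_{0}^{ep^{M}-e\epsilon}$), but since the statement only asks for $C^{e-\epsilon}$ this is not exploited.
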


\begin{proof}
In step~(vi) of the proof of Theorem~\ref{thm:ml}, we had
produced positive integers $k$, $L$, and $M$,
and a real constant $C_0>1$.  We then set $N=p^M k$ and
$C=C_0^{p^M-\epsilon}$.  Instead, we now set  $N:= ep^M k$
and $C:=C_0^{ep^M - \epsilon}$, as promised in the
statement of Theorem~\ref{boosting}. 

Step~(vii) of the proof of Theorem~\ref{thm:ml} still
applies even when we change all appearances of $p^M$ to $ep^M$.
More precisely, we have $0\leq \alpha < ep^M$ when we write
$\ell = \ell_1 + \alpha_k$, and we write
$n_1 = nep^M + \alpha$ and $n'_1 = n'ep^M + \alpha$.
The $\pmod{p^M}$ in condition (2) becomes $\pmod{ep^M}$, which
of course still implies congruence modulo $p^M$.  The change
from $p^M$ to $ep^M$ ultimately leaves condition~(3') as
$0<n'_1 - n_1 \leq C_0^{n_1}$, though now only for
$n_1\ge \frac{ep^M\log (ep^M)}{\epsilon\log C_0}$).
Thus, conditions (1), (2), and (3') remain the same
as before, allowing exactly the same
application of Lemma~\ref{lem:speeds}.  The rest
of the proof then goes through verbatim.
\end{proof}

\section{Proof of Theorem~\ref{thm:main-counting}}
\label{proof of counting}

\begin{proof}
Assume that $V$ does not contain a positive-dimensional periodic
subvariety.  Then there are constants $C,N,T$ as in
Theorem~\ref{thm:ml}(i).
Let $A=T + N + NC$, and
for each $\ell = T+1,\ldots,T+N$, let
$$\cS_{\ell} :=\{n>C :  \ell + nN\in\cS\}.$$
For all $i\geq 1$, let $n_{\ell,i}$ be the $i^{\rm th}$ smallest
integer in $\cS_{\ell}$, or $n_{\ell,i}=\infty$ if $|\cS_{\ell}|<i$.
Then $n_{\ell,1}>C = C\uparrow\uparrow 1$, and
$n_{\ell,i+1} > n_{\ell,i} + C^{n_{\ell,i}}
> C^{n_{\ell,i}} > C\uparrow \uparrow (i+1)$
for all $i\geq 1$.
Hence, $L_C(n_{\ell,i})\geq i$, and therefore
$L_C(M)\geq i$ for all $M\geq n_{\ell,i}$.
Summing across all $\ell$, we have
\begin{align*}
|\{ n\in\cS : n\leq M \}|
&\leq
|\{ n\in\cS : n\leq A \}| + 
\sum_{\ell=T+1}^{T+N}
|\{ n\in\cS_{\ell} : n\leq M \}|
\\
& \leq A + N \cdot L_C(M).
\qedhere
\end{align*}
\end{proof}

\section{Curves}
\label{sec:control}

The proof of Theorem~\ref{thm:controlling-c-n-for-curves} is simpler
than the proof of Theorem~\ref{thm:ml}, but it requires an
additional ingredient that is only available over number fields,
namely, the existence of a suitable indifferent cycle in at least one
of the variables (which one obtains over number fields by
\cite[Theorem~2.2]{SilSiegel} or \cite[Lemma~6.1]{Par}).  Because of
the counterexample presented in Proposition~\ref{prop:badex}, it seems
likely that a proof of Conjecture~\ref{dynamical Mordell-Lang} would
also have to involve extra information beyond what is used in the
proof of Theorem~\ref{thm:ml}.  Thus, although
Theorem~\ref{thm:controlling-c-n-for-curves} only applies to curves,
it may well be that the techniques used to prove it are better adapted
to a general proof of Conjecture~\ref{dynamical Mordell-Lang}.

To prove Theorem~\ref{thm:controlling-c-n-for-curves} we will need a
sharper version of Lemma~\ref{lem:siegel}, giving an upper bound on
$k$.
%
We first recall  the following special 
case of \cite[Theorem~3.3]{Jason}.
\begin{theorem}
\label{thm:analytic-congruent-to-z}
  Let $p>3$ be prime, let $K_{p}/\Qp$ be a finite unramified
  extension,
  and let $\mathcal{O}_{p}$ denote the ring of integers in $K_{p}$.
  Let $g(z) = a_0 + a_1 z + a_2 z^2 + \cdots \in \mathcal{O}_{p}[[z]]$
  be a power series with $|a_0|_p, |a_1-1|_p < 1$
  and for each $i\geq 2$, $|a_i|_p\leq p^{1-i}$.
  Then for any $z_0 \in \mathcal{O}_{p}$, there
  is a power series
  $u \in \cO_{p}[[z]]$ mapping $\Dbar(0,1)$ into itself
  such that $u(0) = z_0$, and $u(z+1) = g(u(z))$.
\end{theorem}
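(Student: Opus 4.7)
The plan is to define $u$ as the Mahler interpolation of the orbit $\phi\colon\N\to\cO_p$, $\phi(n):=g^n(z_0)$, and then verify the functional equation analytically. Setting $c_k:=\Delta^k\phi(0)$ for the $k$-th forward difference, the heart of the argument is the decay estimate $c_k\in p^k\cO_p$ for all $k\geq 0$.

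To establish this, I first note that the hypotheses force $h(z):=g(z)-z$ to lie in $p\cO_p[[z]]$, since $a_0,\,a_1-1\in p\cO_p$ and $a_i\in p^{i-1}\cO_p\subseteq p\cO_p$ for $i\geq 2$. Let $T$ denote the composition operator $Tf:=f\circ g$, so that $c_k=(T-\mathrm{id})^k(z)|_{z=z_0}$. I will prove by induction on $k\geq 1$ that $(T-\mathrm{id})^k(z)=p^kF_k(z)$ for some $F_k\in\cO_p[[z]]$ converging on $\Dbar(0,1)$. The base case is $(T-\mathrm{id})(z)=h(z)$. For the inductive step, Taylor expansion yields
$$(T-\mathrm{id})(F_k)(z)\;=\;F_k(z+h(z))-F_k(z)\;=\;\sum_{m\geq 1}\frac{F_k^{(m)}(z)}{m!}\,h(z)^m.$$
The key observation is that the divided derivative $F_k^{(m)}/m!$ still has coefficients in $\cO_p$: if $F_k(z)=\sum_i b_iz^i$, then $F_k^{(m)}(z)/m!=\sum_{i\geq m}b_i\binom{i}{m}z^{i-m}$, and $\binom{i}{m}\in\Z$. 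Meanwhile $h^m\in p^m\cO_p[[z]]$, so each summand lies in $p^m\cO_p[[z]]\subseteq p\cO_p[[z]]$. Hence $(T-\mathrm{id})^{k+1}(z)=p^k(T-\mathrm{id})(F_k)\in p^{k+1}\cO_p[[z]]$, completing the induction.

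With $|c_k|_p\leq p^{-k}$ in hand, I set $u(z):=\sum_{k\geq 0}c_k\binom{z}{k}$. For $z\in\Dbar(0,1)$, the estimate $|\binom{z}{k}|_p\leq|k!|_p^{-1}\leq p^{k/(p-1)}$ combines with the decay of $c_k$ to give $|c_k\binom{z}{k}|_p\leq p^{-k(p-2)/(p-1)}\to 0$ for $p>2$. Thus $u$ converges to an analytic function on $\Dbar(0,1)$ with $|u(z)|_p\leq 1$; the $p$-adic maximum principle forces the standard power-series coefficients of $u$ to lie in $\cO_p$, and clearly $u(0)=c_0=z_0$. By construction $u(n+1)=g^{n+1}(z_0)=g(u(n))$ for every $n\in\N$, so the analytic functions $u(z+1)$ and $g(u(z))$ on $\Dbar(0,1)$ agree on the infinite subset $\N\subset\Zp\subseteq\Dbar(0,1)$, which has accumulation points. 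The identity principle then gives $u(z+1)=g(u(z))$ everywhere on $\Dbar(0,1)$, as required.

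The main obstacle is the inductive estimate on $(T-\mathrm{id})^k(z)$: gaining a factor of $p$ with each application of $T-\mathrm{id}$ hinges on $h\in p\cO_p[[z]]$ together with the elementary fact that divided derivatives preserve integrality. The rest is standard Mahler--Amice interpolation, in which the unramifiedness of $K_p$ (ensuring the valuation takes integer values and placing $a_i$ correctly in $p^{i-1}\cO_p$) and the assumption $p>3$ guarantee that the factorials in $\binom{z}{k}$ do not overpower the $p^k$-decay of $c_k$.
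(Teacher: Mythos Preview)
Your proof is correct. Note, however, that the paper does not actually prove this theorem: it is stated as a special case of \cite[Theorem~3.3]{Jason} (Bell--Ghioca--Tucker), with only a remark that the argument there extends from $\Qp$ to any finite unramified extension. So you have supplied a self-contained argument where the paper merely cites one.

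Your approach via Mahler interpolation of the orbit $n\mapsto g^n(z_0)$ is in fact the method underlying the cited reference. The key estimate $\Delta^k\phi(0)\in p^k\cO_p$, obtained by showing that each application of $T-\mathrm{id}$ to an element of $\cO_p\langle z\rangle$ gains a factor of $p$ (because $g(z)-z\in p\,\cO_p\langle z\rangle$ and divided derivatives preserve integrality), is precisely the mechanism that drives such results. Two minor remarks: first, your convergence estimate $|c_k\binom{z}{k}|_p\leq p^{-k(p-2)/(p-1)}$ already tends to zero for $p=3$, so your argument in fact only requires $p\geq 3$; the hypothesis $p>3$ in the stated theorem presumably reflects the more general setting of \cite{Jason}. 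Second, to invoke the identity principle at the end you are implicitly using that both $u(z+1)$ and $g(u(z))$ lie in the Tate algebra $\cO_p\langle z\rangle$; the latter requires that composition with an inner function mapping $\Dbar(0,1)$ to itself preserves $\cO_p\langle z\rangle$, which is standard but worth making explicit since $u(0)=z_0$ need not vanish and so the composition is not \emph{a priori} a formal power-series substitution.
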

\begin{remark}
  In \cite{Jason}, the theorem is only stated for $K_{p} = \Q_p$, but
  the proof goes through essentially
  unchanged for any finite unramified extension
  of $\Q_p$.
\end{remark}

We can now give an explicit bound on $k$.  However, we give up any
claims on the size of the image of $u$.
In fact, if $z_{0}$ is a periodic point, the
map $u$ is constant.  (On the other hand, if $z_{0}$ is not periodic,
then the derivative of $u$ is nonvanishing at zero, and hence $u$ is a
local bijection.)
\begin{proposition}
\label{thm:bounding-k}
Let $p>3$ be prime,
let $K_p$ and $\cO_p$ be as in
Theorem~\ref{thm:analytic-congruent-to-z},
let $h(z) \in \mathcal{O}_{p}[[z]]$ be a power series,
and let $z_{0} \in \mathcal{O}_{p}$.
Suppose that $|h(z_0)-z_0|_p < 1$ and $|h'(z_{0})|_p=1$.  Then
there is an integer
$1\leq k \leq p^{[K_{p}:\Q_p]}$
and a power series $u\in\cO_p[[z]]$ mapping
$\Dbar(0,1)$ into $\Dbar(0,1)$
such that $u(0)=z_0$ and $h^{k}(u(z)) = u(z+1)$.
In particular,
$$
h^{nk}(z_{0}) = u(n)
\qquad\text{for all } n\geq 0.
$$
\end{proposition}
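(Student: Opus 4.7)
The plan is to reduce to Theorem~\ref{thm:analytic-congruent-to-z} after a shift centered at $z_0$ and a rescaling by $p$. Set $H(z) := h(z+z_0) - z_0 \in \cO_p[[z]]$, so that $H(0) = h(z_0) - z_0 \in p\cO_p$ and $H'(0) = h'(z_0) \in \cO_p^\times$. It suffices to produce $k \le q := p^{[K_p:\Q_p]}$ and a power series $v \in \cO_p[[z]]$ mapping $\Dbar(0,1)$ into itself with $v(0) = 0$ and $v(z+1) = H^k(v(z))$: for then $u(z) := pv(z) + z_0$ lies in $\cO_p[[z]]$, takes $\Dbar(0,1)$ into $\Dbar(0,1)$, satisfies $u(0)=z_0$ and $h^k(u(z)) = u(z+1)$, and the final identity $h^{nk}(z_0) = u(n)$ then follows by induction on $n$.

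The key device is the rescaling $\hat H_k(w) := H^k(pw)/p$. Since $H^k(0) \in p\cO_p$, this lies in $\cO_p[[w]]$; moreover, its $w^i$-coefficient equals $p^{i-1}\cdot[z^i]H^k$, which automatically has $p$-adic absolute value at most $p^{1-i}$ for $i \ge 2$. Hence the higher-order Taylor decay condition of Theorem~\ref{thm:analytic-congruent-to-z} is free, and what remains is to arrange the constant and linear hypotheses, namely
\[
H^k(0) \in p^2\cO_p \quad\text{and}\quad (H^k)'(0) \equiv 1 \pmod p,
\]
for some $k \le q$. Once such $k$ is found, applying Theorem~\ref{thm:analytic-congruent-to-z} to $\hat H_k$ (with its $z_0$-parameter equal to $0$) produces $v$, and setting $u(z) := pv(z) + z_0$ completes the construction.

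To find $k$, write $H^n(0) = pX_n$. Because every coefficient of $H(pw)/p$ beyond the linear term lies in $p\cO_p$, the recurrence $X_{n+1} = H(pX_n)/p$ reduces modulo $p$ to the affine recurrence $X_{n+1} \equiv A + B X_n \pmod p$, where $A := (H(0)/p) \bmod p$ and $B := H'(0) \bmod p \in k_p^\times$. Thus $X_k \equiv A(1 + B + \cdots + B^{k-1}) \pmod p$ and $(H^k)'(0) \equiv B^k \pmod p$. Let $m$ be the order of $B$ in $k_p^\times$, so $m \mid q-1$. A short case analysis gives the required $k$: (i) if $A = 0$, take $k := m \le q-1$; (ii) if $A \ne 0$ and $B \ne 1$, take $k := m$, noting $A(B^m-1)/(B-1) = 0$; (iii) if $A \ne 0$ and $B = 1$, take $k := p \le q$, so $X_k \equiv pA = 0$ in $k_p$. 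In every case both conditions hold and $k \le q$. The main obstacle is precisely the bound $k \le q$; the decisive trick is the rescaling $\hat H_k$, which turns what would otherwise be a prohibitive condition on the higher Taylor coefficients of $H^k$ into a triviality and reduces the problem to a one-dimensional affine recurrence mod $p$.
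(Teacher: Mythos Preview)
Your approach is the same as the paper's: translate to center at $z_0$, rescale by $p$ so that the higher-order decay condition of Theorem~\ref{thm:analytic-congruent-to-z} is automatic, reduce the choice of $k$ to the order of an affine map over the residue field, and then apply the theorem. Indeed your $\hat H_k(w)=H^k(pw)/p$ is exactly the paper's $g^k$, where $g(w)=h(pw)/p$ after translation; your case analysis for $k$ just spells out what the paper compresses into the phrase ``by considering the iterates of $z\mapsto b_0+b_1 z$.''

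There is one slip in the write-up. In the first paragraph you assert that it suffices to find $v$ with $v(z+1)=H^k(v(z))$ and then set $u:=pv+z_0$; but that implication is false, since then $u(z+1)=pH^k(v(z))+z_0$ while $h^k(u(z))=H^k(pv(z))+z_0$, and $pH^k(v)\neq H^k(pv)$ in general. What you actually construct in the second paragraph, and what you need, is $v$ with $v(z+1)=\hat H_k(v(z))$; from this, $u(z+1)=p\hat H_k(v(z))+z_0=H^k(pv(z))+z_0=h^k(u(z))$ follows correctly. So the argument is right; only the preview sentence mis-states the intermediate functional equation.
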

\begin{proof}
Let $q=p^{[K_{p}:\Q_p]}$ denote the cardinality of the residue field
of $\mathcal{O}_p$.   
Conjugating by a translation we may assume that $z_{0}=0$.
Let
$$
g(z) :=  h(p z)/p
= b_0 + b_1 z + b_2 z^2 + \cdots \in \cO_p[[z]]
$$
We find that $|b_{0}|_p \leq 1, |b_{1}|_p=1$,
and $|b_i|_p\leq p^{1-i}$ for each $i\geq 2$.
By considering the iterates of the map
$z\mapsto b_0 + b_1 z$, we have
$g^k(z) \equiv z \pmod{p}$ 
for some $1\leq k\leq q$.
Hence,
$g^k(z) = a_0 + a_1z + a_2 z^2 + \cdots$
satisfies the hypotheses of
Theorem~\ref{thm:analytic-congruent-to-z},
giving a power series $\tilde{u}\in\cO_p[[z]]$
mapping $\Dbar(0,1)$ into itself,
with $\tilde{u}(0)=0$
and $\tilde{u}(z+1) = g^k(\tilde{u}(z))$.
It follows that $u(z) = p\tilde{u}(z)$
has the desired properties.
\end{proof}

Now we are ready to prove Theorem~\ref{thm:controlling-c-n-for-curves}.

\begin{proof}[Proof of Theorem~\ref{thm:controlling-c-n-for-curves}]
For simplicity we assume that $X = \Pl^{1} \times \Pl^{1}$,
and that $V
\subset X$ is an irreducible curve; the argument is easily modified
to include the general case.
If $x_i$ is preperiodic under $f_i$ for either $i=1$ or $i=2$,
the result is trivial.
If both $f_{1}$ and $f_{2}$ are of degree one, $V$
can be shown to be
be periodic, either by the Skolem-Mahler-Lech theorem, or by
\cite[Theorem 3.4]{Par}.
Thus, possibly after permuting indices, we may assume
that the 
degree of $f_{1}$ is greater than $1$.
%
Define $\pi_1 : V \to \Pl^1(K)$ by
$(z_{1},z_{2}) \to z_{1}$.
By taking an periodic cycle $D=\{d_1,\ldots,d_a\}$ of $f_{1}$
of sufficiently large cardinality $a$,
defined over some number field $L$, we 
may assume that 
$D$ is not superattracting (i.e., no $d_i$ is a critical
point of $f_1$), that
all points $(\alpha_{1},\alpha_{2}) \in \pi_1^{-1}(D) \cap V$ are
smooth points on $V$, and finally,
that for $(z_{1},z_{2})$ near $(\alpha_1,\alpha_2)$,
\begin{equation}
  \label{eq:x1-x2-curve-linear}
z_{1}-\alpha_{1} = \gamma_{\alpha} \cdot (z_{2}-\alpha_{2}) + 
O\left( (z_{2}-\alpha_{2})^{2} \right),
\end{equation}
for some $\gamma_{\alpha}\ne 0$.
(Note that only finitely many points violate these conditions.)  
Since $f_{1}$ is not preperiodic, by \cite[Theorem~2.2]{SilSiegel} (or
\cite[Lemma~6.1]{Par}), we can find infinitely many
primes $p$ such 
that $|f_{1}^{n}(x_{1})-d_{1}|_{p}<1$ for some $n$,
where $|\cdot|_p$ denotes some extension of
the $p$-adic absolute value on $\Q$ to $L$.  We may
of course assume that $L/\Q$ is unramified at $p$
and that $|\gamma_\alpha|_p = |(f_1^a)'(d_1)|_p=1$
for all sufficiently large $p$, as there are only
finitely many $p$ not fitting these conditions.
In particular, the orbit of $x_{1}$ under $f_1$
ends up in a domain of quasiperiodicity.

If the orbit of $x_{2}$ under $f_2$ also has quasiperiodic behavior,
then $V$
is periodic by \cite[Theorem~3.4]{Par}.  Otherwise, the
orbit of $x_2$ ends up in an attracting or superattracting domain.
The arguments
in these two cases are very similar, and we shall only give details
for the attracting case.
Hence, assume that $f_{2}^{n}(x_{2})$ tends
to an attracting cycle $E = \{e_1, e_2, \ldots, e_{b}\}$, with
multiplier $\lambda_{2}$ satisfying $0<|\lambda_2|_{p}<1$.  Since
$\lambda_{2}$ and $E$ are defined over $K_{p}$, and $K_p/\Q_p$
is unramified, we have $|\lambda|_{p}\leq 1/p$.
Note that $b \leq p^{[K_{p}:\Q_{p}]}+1 \leq
p^{[K:\Q]}+1$.  Let $N = \lcm(a,b)$, so that
$N \leq a \cdot (p^{[K:\Q]}+1) = O(p^{[K:\Q]})$.
Choose representatives
$\{\alpha_{ij}:1\leq i \leq a, \, 1\leq j \leq b\}$
for $\Z/N\Z$ such that
$$
|f_{1}^{\alpha_{ij}+Nn}(x_{1})-d_{i}|_{p}<1, \quad
|f_{2}^{\alpha_{ij}+Nn}(x_{2})-e_{j}|_{p}<1
$$
for $n$ sufficiently large.  
At the cost of increasing $a$ (and hence $N$)
by a factor bounded by $p^{[K:\Q]}$, by
Proposition~\ref{thm:bounding-k} 
and Lemma~\ref{lem:attr}
there exist $p$-adic power series $A_{i},B_{j}$,
such that
\begin{equation}
\label{eq:f1-f2-formula}
f_{1}^{\alpha_{ij}+Nn}(x_{1})-d_{i} =
A_{i}(k), \quad
f_{2}^{\alpha_{ij}+Nn}(x_{2})-e_{j} = 
B_{j}(\lambda_{2}^{k})
\end{equation}
for $n$ sufficiently large.
If $n>m$ and $\phi^{mN+\alpha_{ij}} (P),
\phi^{nN+\alpha_{ij}}(P) \in V$,
then (\ref{eq:x1-x2-curve-linear}) and
(\ref{eq:f1-f2-formula})  yield that
$$
|A_{i}(n) - A_{i}(m) |_{p} = 
O(|\lambda_{2}|_{p}^{m}),
$$
since we had $|\gamma_{\alpha}|_p=1$ in \eqref{eq:x1-x2-curve-linear}.
Hence $n \equiv m \pmod{p^{m-O_{p}(1)}}$, where the $O_{p}(1)$
depends on the derivative of $A_{i}$.
Thus, if we take
$C<p$, we find that $n \geq m+C^{m}$  for $m$
sufficiently large. 
\end{proof}

\section{An analytic counterexample}
\label{analytic example}

It is natural to ask if an even more rapid growth condition than the
one in Theorem~\ref{thm:ml} should hold when $V$ is not periodic.
However, as 
the following shows, Lemma~\ref{lem:speeds}
is essentially sharp.
\begin{proposition}
\label{prop:badex}
For any prime $p\geq 2$ and for any positive integer $n_1$,
there is an increasing sequence $\{n_j\}_{j\geq 2}$
of positive integers and a power series $f(z) \in \Zp[[z]]$ such that
$$
f(p^{n_j}) = n_j
\quad\text{and}\quad
n_j + p^{n_j} \leq n_{j+1} \leq n_j + p^{n_1+\cdots+n_j}$$
for all $j\geq 1$.  Moreover,
$n_1+\cdots + n_{j-1} \leq n_j$,
and hence $n_{j+1} \leq n_j + p^{2n_j}$.
\end{proposition}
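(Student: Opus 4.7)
The plan is to construct the sequence $\{n_j\}$ and the power series $f$ simultaneously by Newton interpolation, taking the greedy choice $n_{j+1} := n_j + p^{n_j}$ (which saturates the lower bound). Writing $M_j := n_1 + \cdots + n_j$, a routine induction gives $M_{j-1} \leq n_j$ for $j \geq 2$ (base: $n_1 \leq n_1 + p^{n_1} = n_2$; step: $M_j = M_{j-1} + n_j \leq 2 n_j \leq p^{n_j} + n_j = n_{j+1}$, using $p^{n_j} \geq n_j$). Hence $p^{n_j} \leq p^{M_j}$, so $n_{j+1} \leq n_j + p^{M_j}$, and $M_j \leq 2 n_j$ further gives $n_{j+1} \leq n_j + p^{2 n_j}$, handling all the bounds of the proposition.

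I then define $f(z) := \sum_{j \geq 1} b_j P_j(z)$ with $P_j(z) := \prod_{i=1}^{j-1}(z - p^{n_i})$, and $b_j \in \Qp$ chosen recursively so that the partial sums $f_j := \sum_{i \leq j} b_i P_i$ interpolate the required data: $b_1 := n_1$ and
\[
b_{j+1} := \frac{n_{j+1} - f_j(p^{n_{j+1}})}{\prod_{i=1}^j (p^{n_{j+1}} - p^{n_i})}.
\]
Since $p^{n_{j+1}} - p^{n_i} = -p^{n_i}(1 - p^{n_{j+1} - n_i})$ with the second factor in $\Zp^\times$, the denominator has $p$-adic valuation exactly $M_j$; and as $n_{j+1} \geq M_j$, we have $f_j(p^{n_{j+1}}) \equiv f_j(0) \pmod{p^{M_j}}$, so $b_{j+1} \in \Zp$ is equivalent to the single congruence $n_{j+1} \equiv f_j(0) \pmod{p^{M_j}}$.

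The heart of the proof is the inductive claim
\[
f_j(0) \equiv n_{j+1} \pmod{p^{M_j}},
\]
which I will establish by induction on $j$. The base $j = 1$ reads $n_1 \equiv n_2 \pmod{p^{n_1}}$. For the inductive step, the hypothesis immediately furnishes $b_{j+1} \in \Zp$ by the previous paragraph, so $f_{j+1} := f_j + b_{j+1} P_{j+1}$ is well-defined, and a direct manipulation gives
\[
f_{j+1}(0) - n_{j+2} = \frac{Y(U-1) - p^{n_{j+1}}\bigl(U + \tilde{f}_j(p^{n_{j+1}})\bigr)}{U},
\]
where $U := \prod_{i=1}^j (1 - p^{n_{j+1} - n_i}) \in \Zp^\times$, $Y := f_j(0) - n_{j+1} \in p^{M_j}\Zp$, and $\tilde{f}_j(z) := (f_j(z) - f_j(0))/z \in \Zp[z]$. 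The inductive hypothesis also yields the companion congruence $\tilde{f}_j(p^{n_j}) = (n_j - f_j(0))/p^{n_j} \equiv -1 \pmod{p^{M_{j-1}}}$, which, via Taylor expansion along $p^{n_{j+1}} - p^{n_j} = p^{n_j}(p^{p^{n_j}} - 1)$, extends to $\tilde{f}_j(p^{n_{j+1}}) \equiv -1 \pmod{p^{M_{j-1}}}$. Combined with $v_p(U-1) = p^{n_j}$ (the smallest exponent in the product defining $U$), both $Y(U-1)$ and $p^{n_{j+1}}(U + \tilde{f}_j(p^{n_{j+1}}))$ have valuation at least $M_{j+1} - n_j$; a careful bookkeeping of the next-order contributions, using the identity $n_{j+1} - n_j = p^{n_j}$, forces cancellation of their leading parts modulo $p^{M_{j+1}}$, closing the induction. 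Finally, once every $b_j \in \Zp$, convergence of $f$ in $\Zp[[z]]$ is automatic: the coefficient of $z^k$ in $P_j(z)$ is $\pm e_{j-1-k}(p^{n_1}, \ldots, p^{n_{j-1}})$ with $p$-adic valuation at least $n_1 + \cdots + n_{j-1-k} \to \infty$.

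\textbf{Main obstacle.} Everything rests on the cancellation in the inductive step above: both $Y(U-1)$ and $p^{n_{j+1}}(U + \tilde{f}_j(p^{n_{j+1}}))$ individually fall short of the required modulus $p^{M_{j+1}}$ by exactly $n_j$ in valuation, and it is precisely the identity $n_{j+1} = n_j + p^{n_j}$ (and nothing weaker) that causes their leading contributions to match modulo $p^{M_{j+1}}$. Carrying out this delicate valuation bookkeeping is the most intricate part of the proof.
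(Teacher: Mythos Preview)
Your Newton-interpolation framework matches the paper's, but you diverge at the key step: you \emph{fix} $n_{j+1} := n_j + p^{n_j}$ in advance and then attempt to prove the congruence $f_j(0) \equiv n_{j+1} \pmod{p^{M_j}}$ inductively, whereas the paper \emph{defines} $n_{j+1}$ to be the unique integer in $(n_j,\, n_j + p^{M_j}]$ satisfying that congruence. With the paper's choice, $c_j \in \Zp$ is automatic, and the lower bound $n_{j+1} \geq n_j + p^{n_j}$ falls out as a one-line corollary: since $f_j(p^{n_j}) = n_j$ and $f_j \in \Zp[z]$, one has $f_j(0) \equiv n_j \pmod{p^{n_j}}$, whence $n_{j+1} \equiv n_j \pmod{p^{n_j}}$, forcing $n_{j+1} - n_j \geq p^{n_j}$.

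Your route has a genuine gap precisely where you flag it. From the inductive hypothesis $v_p(Y) \geq M_j$ together with $v_p(U-1) = p^{n_j}$ you obtain $v_p\bigl(Y(U-1)\bigr) \geq M_j + p^{n_j} = M_{j+1} - n_j$; from the companion congruence $\tilde{f}_j(p^{n_{j+1}}) \equiv -1 \pmod{p^{M_{j-1}}}$ you obtain $v_p\bigl(p^{n_{j+1}}(U + \tilde{f}_j)\bigr) \geq n_{j+1} + M_{j-1} = M_{j+1} - n_j$. Both estimates fall short of $M_{j+1}$ by exactly $n_j$, and you assert that the ``leading parts'' cancel. But establishing that cancellation would require controlling $Y$ modulo $p^{M_j + n_j}$ and $\tilde{f}_j(p^{n_{j+1}}) + 1$ modulo $p^{M_{j-1} + n_j}$, information the stated inductive hypothesis simply does not supply; you would have to strengthen the hypothesis and carry an extra $n_j$ digits of $p$-adic precision through every step, and you have not done so. Numerical checks (e.g.\ $p=2$, $n_1=1$, through $j=3$) suggest the congruence you want is in fact true, so your approach may be salvageable with a sharper induction, but as written it is incomplete --- and unnecessarily hard, since the paper's device of \emph{choosing} $n_{j+1}$ to satisfy the congruence removes the obstacle altogether.
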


\begin{remark}
Setting $G(z_0,z_1)=z_1 - f(z_0)$, we find that Lemma~\ref{lem:speeds}
cannot be substantially improved; specifically, the constant $C$ is at
most $p^2$ for this example.
Furthermore, the bound of $p^2$ can be improved to
something much closer to $p$ because, by a simple inductive argument,
one can show that for every $j\ge 1$, we have
$$n_1+\cdots + n_j \le \frac{n_{j+1}}{n_1},$$
from which
$n_{j+1} \leq n_j + p^{n_j\cdot \left( 1+ \frac{1}{n_1}\right)}$
follows. Letting $n_1$ be arbitrarily large we obtain
that for every $\epsilon>0$ there exists an increasing sequence
$\{n_j\}_{j\ge 1}$ satisfying the hypothesis of
Proposition~\ref{prop:badex}, and for which
$$n_j + p^{n_j} \leq n_{j+1} \leq n_j + p^{(1+\epsilon)\cdot n_j}.$$
\end{remark}

\begin{proof}[Proof of Proposition~\ref{prop:badex}.]

We will inductively construct the sequence $\{n_j:j\geq 2\}$ of positive
integers and a sequence $\{f_j(z):j\geq 1\}$
of polynomials $f_j\in\Zp[z]$,
with $\deg(f_j)=j-1$.  The power series $f$ will be
$f=\lim_{j\to\infty}f_j$.

Let $f_1$ be the constant polynomial $n_1$.
Then, for each $j\geq 1$, suppose we are already given $f_1,\ldots,f_j$
and $n_1,\ldots,n_j$ such that
$f_k(p^{n_i})=n_i$
for each $i,k$ with $1\leq i\leq k\leq j$.
Choose $n_{j+1}$ to be
the unique integer such that
$$n_j + 1 \leq n_{j+1} \leq n_j + p^{n_1+\cdots + n_j}$$
and
\begin{equation}
\label{eq:njbound}
|n_{j+1} - f_j(0)|_p \leq |p|_p^{n_1 + \cdots + n_j} .
\end{equation}
Note that because 
$f_j\in\Zp[z]$ and $f_j(p^{n_j})=n_j$, we have
$$|f_j(0)-n_j|_p = |f_j(0) - f_j(p^{n_j})|_p \leq |p|_p^{n_j},$$
and therefore $|n_{j+1}-n_j|_p \leq |p|_p^{n_j}$, implying that
$n_{j+1}\geq n_j + p^{n_j}$ and
that
$n_{j+1}\geq n_1 + n_2 + \cdots + n_j$,
as claimed in the Proposition.

Define
$g_j(z):=(z-p^{n_1})(z-p^{n_2})\cdots(z-p^{n_j})$, and set
$$c_j := \frac{\dsps n_{j+1} - f_j(p^{n_{j+1}})}{\dsps g_j(p^{n_{j+1}})}
\in\Qp,$$
and
$$f_{j+1}(z) := f_j(z) + c_j g_j(z)\in\Qp[z].$$


Clearly, $f_{j+1}(p^{n_i})=n_i$ for all $i=1,\ldots,j+1$.
We will show that $c_j\in\Zp$, and hence $f_{j+1}\in\Zp[z]$, completing
the induction.
In fact, because
(for any fixed $m\geq 0$)
the size of the $z^m$-coefficient of $g_j$
goes to zero as $j\to\infty$, it follows that
$\lim_{j\to\infty}f_j$ converges to some power series $f\in\Zp[[z]]$.
Moreover, we get $f(p^{n_i})=n_i$ for all $i\geq 1$.

Thus, it suffices to show $|c_j|_p\leq 1$.
However, we have 
\begin{equation}
\label{eq:fjbound}
|f_j(0) - f_j(p^{n_{j+1}})|_p \leq |p|_p^{n_{j+1}},
\end{equation}
because $f_j\in\Zp[[z]]$.  Therefore,
\begin{align*}
|n_{j+1} - f_j(p^{n_{j+1}})|_p &\leq
\max\{ |n_{j+1}-f_j(0)|_p , |f_j(0) - f_j(p^{n_{j+1}})|_p \}
\\
&\leq 
\max\{ |p|_p^{n_1 + \cdots + n_j} , |p|_p^{n_{j+1}} \} \\
& = |p|_p^{n_1 + \cdots + n_j} = |g_j(p^{n_{j+1}})|_p,
\end{align*}
where the second inequality is by \eqref{eq:njbound} and
\eqref{eq:fjbound}.
It follows immediately that $|c_j|_p \leq 1$, as desired.

\end{proof}

\begin{thebibliography}{Ray83b}

\bibitem[Abr94]{Abramovich}
D.~Abramovich, \emph{Subvarieties of semiabelian varieties}, Compositio Math.
  \textbf{90} (1994), no.~1, 37--52.

\bibitem[Bel06]{Bell}
J.~P. Bell, \emph{A generalised {S}kolem-{M}ahler-{L}ech theorem for affine
  varieties}, J. London Math. Soc. (2) \textbf{73} (2006), no.~2, 367--379.

\bibitem[BGKT]{Par}
R.~L. Benedetto, D.~Ghioca, P.~Kurlberg, and T.~J. Tucker, \emph{The dynamical
  {M}ordell-{L}ang conjecture}, submitted for publication, 2007, available
  online at {\tt http://arxiv.org/abs/0712.2344}.

\bibitem[BGT]{Jason}
J.~P. Bell, D.~Ghioca, and T.~J. Tucker, \emph{The dynamical {M}ordell-{L}ang
  problem for \'{e}tale maps}, submitted for publication, 2008, available
  online at {\tt http://arxiv.org/abs/0808.3266}, 19 pages.

\bibitem[Den94]{Denis-dynamical}
L.~Denis, \emph{G\'{e}om\'{e}trie et suites r\'{e}currentes}, Bull. Soc. Math.
  France \textbf{122} (1994), no.~1, 13--27.

\bibitem[DR55]{DR}
H.~Davenport and K.~F. Roth, \emph{Rational approximations to algebraic
  numbers}, Mathematika \textbf{2} (1955), 160--167.

\bibitem[Fal83]{Falt1}
G.~Faltings, \emph{Endlichkeitss\"atze f\"ur abelsche {V}ariet\"aten \"uber
  {Z}ahlk\"orpern}, Invent. Math. \textbf{73} (1983), no.~3, 349--366.

\bibitem[Fal94]{Faltings}
\bysame, \emph{The general case of {S}. {L}ang's conjecture}, Barsotti
  Symposium in Algebraic Geometry (Abano Terme, 1991), Perspect. Math., no.~15,
  Academic Press, San Diego, CA, 1994, pp.~175--182.

\bibitem[GT]{new-log}
D.~Ghioca and T.~J. Tucker, \emph{Periodic points, linearizing maps, and the
  dynamical {M}ordell-{L}ang problem}, submitted for publication, 2008,
  available online at {\tt http://arxiv.org/pdf/0805.1560v1}, 14 pages.

\bibitem[GTZ]{Mike-2}
D.~Ghioca, T.~J. Tucker, and M.~Zieve, \emph{Linear relations between
  polynomial orbits}, submitted for publication, Available at {\tt
  http://arxiv.org/abs/0807.3576}, 27 pages.

\bibitem[GTZ08]{Mike}
\bysame, \emph{Intersections of polynomial orbits, and a dynamical
  {M}ordell-{L}ang conjecture}, Invent. Math. \textbf{171} (2008), no.~2,
  463--483.

\bibitem[Lec53]{Lech}
C.~Lech, \emph{A note on recurring series}, Ark. Mat. \textbf{2} (1953),
  417--421.

\bibitem[Mum65]{Mumford}
D.~Mumford, \emph{A remark on {M}ordell's conjecture}, Amer. J. Math.
  \textbf{87} (1965), 1007--1016.

\bibitem[Ray83a]{Raynaud1}
M.~Raynaud, \emph{Courbes sur une vari\'{e}t\'{e} ab\'{e}lienne et points de
  torsion}, Invent. Math. \textbf{71} (1983), no.~1, 207--233.

\bibitem[Ray83b]{Raynaud2}
\bysame, \emph{Sous-vari\'{e}t\'{e}s d'une vari\'{e}t\'{e} ab\'{e}lienne et
  points de torsion}, Arithmetic and geometry, vol. I, Progr. Math., vol.~35,
  Birkh\"{a}user, Boston, MA, 1983, pp.~327--352.

\bibitem[RL03]{Riv1}
J.~Rivera-Letelier, \emph{Dynamique des fonctions rationnelles sur des corps
  locaux}, Ast\'erisque (2003), no.~287, 147--230, Geometric methods in
  dynamics. II.

\bibitem[Sil93]{SilSiegel}
J.~H. Silverman, \emph{Integer points, {D}iophantine approximation, and
  iteration of rational maps}, Duke Math. J. \textbf{71} (1993), no.~3,
  793--829.

\bibitem[Sil07]{silverman-arithmetic-dynamics-book}
\bysame, \emph{The arithmetic of dynamical systems}, Graduate Texts in
  Mathematics, vol. 241, Springer, New York, 2007. \MR{MR2316407 (2008c:11002)}

\bibitem[Ull98]{Ullmo}
E.~Ullmo, \emph{Positivit\'e et discr\'etion des points alg\'ebriques des
  courbes}, Ann. of Math. (2) \textbf{147} (1998), no.~1, 167--179.

\bibitem[Voj96]{V1}
P.~Vojta, \emph{Integral points on subvarieties of semiabelian varieties. {I}},
  Invent. Math. \textbf{126} (1996), no.~1, 133--181.

\bibitem[Zha98]{Zhang}
S.~Zhang, \emph{Equidistribution of small points on abelian varieties}, Ann. of
  Math. (2) \textbf{147} (1998), no.~1, 159--165.

\bibitem[Zha06]{ZhangLec}
S.~Zhang, \emph{Distributions in {A}lgebraic {D}ynamics}, Survey in
  Differential Geometry, vol.~10, International Press, 2006, pp.~381--430.

\end{thebibliography}

\def\cprime{$'$} \def\cprime{$'$} \def\cprime{$'$} \def\cprime{$'$}
\providecommand{\bysame}{\leavevmode\hbox to3em{\hrulefill}\thinspace}
\providecommand{\MR}{\relax\ifhmode\unskip\space\fi MR }
\providecommand{\MRhref}[2]{%
  \href{http://www.ams.org/mathscinet-getitem?mr=#1}{#2}
}
\providecommand{\href}[2]{#2}

\end{document}